\newcommand{\mcc}{{\mathbb{C}}}
\newcommand{\mff}{{\mathbb{F}}}
\newcommand{\mqq}{{\mathbb{Q}}}
\newcommand{\mrr}{{\mathbb{R}}}
\newcommand{\mzz}{{\mathbb{Z}}}
\newcommand{\msetb}[1]{{\left\{{#1}\right\}}}				
\newcommand{\msetf}[1]{{\left\lfloor{#1}\right\rfloor}}		
\newcommand{\msetp}[1]{{\left({#1}\right)}}					
\newcommand{\msets}[1]{{\left[{#1}\right]}}					
\newcommand{\msetv}[1]{{\left|{#1}\right|}}					
\theoremstyle{plain}
\newtheorem{lemma}{Lemma}
\newtheorem{proposition}[lemma]{Proposition}
\newtheorem{theorem}[lemma]{Theorem}
\newtheorem*{question}{Question}
\title{Elliptic curves maximal over extensions of finite base fields}
\author{A.S.I. Anema}
\date{}
\begin{document}

\maketitle

\begin{abstract}
Given an elliptic curve $E$ over a finite field $\mathbb{F}_q$ we study the
finite extensions $\mathbb{F}_{q^n}$ of $\mathbb{F}_q$ such that the number of
$\mathbb{F}_{q^n}$-rational points on $E$ attains the Hasse upper bound. We
obtain an upper bound on the degree $n$ for $E$ ordinary using an estimate for
linear forms in logarithms, which allows us to compute the pairs of isogeny
classes of such curves and degree $n$ for small $q$. Using a consequence of
Schmidt's Subspace Theorem, we improve the upper bound to $n\leq 11$ for
sufficiently large $q$. We also show that there are infinitely many isogeny
classes of ordinary elliptic curves with $n=3$.
\end{abstract}

\section{Introduction}

Let $E$ be an elliptic curve over $\mff_q$. Recall the well-known Hasse bound on
the number of points on an elliptic curve
\begin{displaymath}
\msetv{\msetv{E\msetp{\mff_{q^n}}}-q^n-1}
\leq
\msetf{2\sqrt{q}^n},
\end{displaymath}
see for example~\cite[Theorem V.1.1]{silverman1985} or \cite[Theorem~5.1.1]
{soomro2013}. If $E$ attains the Hasse upper bound over some finite extension,
that is
\begin{displaymath}
\msetv{E\msetp{\mff_{q^n}}}
=q^n+1+\msetf{2\sqrt{q}^n}
\end{displaymath}
for some $n$, then we say $E$ is \emph{maximal} over $\mff_{q^n}$. We are
interested in:

\begin{question}
Let $E$ be an elliptic curve over $\mff_q$. Is $E$ maximal over some finite
extension of $\mff_q$?
\end{question}

This question is studied and partially answered by Doetjes in~\cite
{doetjes2009}. He shows that every elliptic curve over $\mff_2$ is maximal over
some extension, that elliptic curves over $\mff_3$ in five isogeny classes are
maximal over some extension, that elliptic curves over $\mff_3$ in the remaining
two isogeny classes are not maximal over extensions of low degree, and that
elliptic curves over $\mff_q$ with $q$ a square are maximal over some extension
in precisely three cases.

Our first result is summarized as:

\begin{theorem}
\label{mc:thm:main}
Let $E$ be an elliptic curve over $\mff_q$ and $a_1=q+1-\msetv{E\msetp{\mff_q}
}$.
\begin{enumerate}
\item If $E$ is supersingular, that is $a_1\in\msetb{0,\pm\sqrt{q},\pm\sqrt{2q},
\pm\sqrt{3q},\pm 2\sqrt{q}}$, then $E$ is maximal over infinitely many
extensions of $\mff_q$ except when $a_1\in\msetb{-\sqrt{q},2\sqrt{q}}$. In these
exceptional cases extensions over which $E$ is maximal do not exist.
\item If $E$ is ordinary, that is $\gcd\msetp{a_1,q}=1$, then there are at most
finitely many extensions of $\mff_q$ over which $E$ is maximal. Furthermore if
$q$ is a square, then such extensions do not exist.
\end{enumerate}
\end{theorem}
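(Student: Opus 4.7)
The plan is to analyze $a_n:=q^n+1-\msetv{E\msetp{\mff_{q^n}}}$ through the Frobenius eigenvalues. Writing $\alpha,\bar\alpha\in\mcc$ for the roots of $T^2-a_1T+q$, one has $\msetv{\alpha}=\sqrt{q}$, so $\alpha=\sqrt{q}\,e^{i\theta}$ for some $\theta\in\msets{0,\pi}$, and $a_n=\alpha^n+\bar\alpha^n=2\sqrt{q}^n\cos\msetp{n\theta}$. Maximality over $\mff_{q^n}$ becomes the diophantine condition $a_n=-\msetf{2\sqrt{q}^n}$, which forces $\cos\msetp{n\theta}$ to lie within $1/\msetp{2\sqrt{q}^n}$ of $-1$.

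For part (1), each supersingular value of $a_1$ makes $\alpha/\sqrt{q}$ a root of unity of explicit order: $a_1=0$ gives order $4$, $a_1=\pm\sqrt{q}$ order $6$, $a_1=\pm\sqrt{2q}$ order $8$, $a_1=\pm\sqrt{3q}$ order $12$, and $a_1=\pm 2\sqrt{q}$ order $1$ or $2$. So $\cos\msetp{n\theta}$ is periodic in $n$ with finitely many values. For every $a_1\notin\msetb{-\sqrt{q},2\sqrt{q}}$ the value $\cos\msetp{n\theta}=-1$ is attained on an arithmetic progression of $n$, and on that progression $2\sqrt{q}^n$ is automatically an integer, so $a_n=-\msetf{2\sqrt{q}^n}$ holds exactly and produces infinitely many maximal extensions. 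The two exceptions fail elementarily: for $a_1=-\sqrt{q}$ one has $\theta=2\pi/3$ and $\cos\msetp{n\theta}\in\msetb{1,-1/2}$; for $a_1=2\sqrt{q}$ one has $\alpha=\sqrt{q}$ and $a_n=2\sqrt{q}^n>0$ for all $n$.

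For part (2), I first treat the sub-case $q$ a square. Then $\sqrt{q}\in\mzz$, $\msetf{2\sqrt{q}^n}=2\sqrt{q}^n$, and maximality forces $\alpha^n+\bar\alpha^n=-2\sqrt{q}^n$ exactly; combined with $\alpha^n\bar\alpha^n=q^n$ this gives $\msetp{\alpha^n+\sqrt{q}^n}^2=0$, so $\alpha/\sqrt{q}$ is a root of unity. Because $\mqq\msetp{\alpha}$ is at most quadratic, its order divides $12$, and enumerating the possibilities forces $a_1\in\msetb{0,\pm\sqrt{q},\pm 2\sqrt{q}}$---the supersingular traces available when $q$ is a square, contradicting ordinariness.

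The remaining ordinary sub-case is the heart of the argument. The maximality inequality $1+\cos\msetp{n\theta}<1/\msetp{2\sqrt{q}^n}$ translates, via $1+\cos x=2\cos^2\msetp{x/2}$ and exponentiation, into $\msetv{\msetp{\alpha/\bar\alpha}^n-1}<C\,q^{-n/4}$ for a constant $C=C\msetp{\alpha}$. For ordinary $E$, $\alpha/\bar\alpha$ lies on the unit circle and is not a root of unity, since otherwise the preceding paragraph would return $E$ to the supersingular list. Baker's theorem applied to the linear form $n\log\msetp{\alpha/\bar\alpha}-2k\log\msetp{-1}$, with auxiliary integer $k$ satisfying $n\theta\approx\msetp{2k+1}\pi$ and hence $\msetv{k}\leq n$, yields a lower bound $\msetv{\msetp{\alpha/\bar\alpha}^n-1}\geq C'n^{-C''}$ with explicit $C',C''$. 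Comparing the polynomial lower bound with the exponential upper bound bounds $n$ effectively. I expect the main obstacle to be this application of Baker: specifying a branch of logarithm, controlling the auxiliary $k$, and tracking the height of $\alpha/\bar\alpha$ through the constants; the supersingular case of (1), by contrast, is purely trigonometric bookkeeping.
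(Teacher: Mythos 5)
Your proposal is correct and follows essentially the same route as the paper: translate maximality into $\msetv{\beta^n+1}<q^{-n/4}$ for $\beta=\alpha/\sqrt{q}$, settle the supersingular case by classifying the order of the root of unity $\beta$ (with the exceptional traces being exactly those of odd order), and bound $n$ in the ordinary case via a linear-forms-in-two-logarithms estimate applied to $n\arg\msetp{\beta}-m\pi$ with $\msetv{m}\leq n$. The only substantive difference is that you invoke a generic Baker-type bound where the paper uses Laurent's explicit estimate (which the paper needs for the computable bounds $N_q$, but not for the qualitative finiteness claim); also note the minor slip that $a_1=-\sqrt{q}$ gives $\beta$ of order $3$ rather than $6$, which you implicitly correct when you take $\theta=2\pi/3$ in the exceptional case.
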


We prove the first part of the theorem in Section~\ref{mc:sec:ssc}. The second
part we treat in Section~\ref{mc:sec:ordc}. There we also give an explicit bound
on the degree of the extension and list the pairs $q,a_1$ with $q<1000$
corresponding to ordinary elliptic curves over $\mff_q$ maximal over some finite
extension. In Subsection~\ref{mc:ssec:ubcard} we show that the degree of the
extension is at most 11 for sufficiently large $q$.

Our second result is:

\begin{theorem}
\label{mc:thm:max3prime}
For infinitely many primes $p$ there exists an elliptic curve $E$ over $\mff_p$
such that $E$ is maximal over $\mff_{p^3}$.
\end{theorem}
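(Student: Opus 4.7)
The plan is to translate maximality over $\mff_{p^3}$ into a diophantine statement about $\sqrt{p}$ modulo $1$ and then invoke a result on primes lying very close to perfect squares.

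Let $a = a_1$ be the Frobenius trace of $E$ over $\mff_p$, and let $\alpha, \bar\alpha$ be the Frobenius eigenvalues, so $\alpha + \bar\alpha = a$ and $\alpha\bar\alpha = p$. Then $\alpha^3 + \bar\alpha^3 = a^3 - 3pa$, hence
\begin{displaymath}
\msetv{E\msetp{\mff_{p^3}}} = p^3 + 1 - \msetp{a^3 - 3pa},
\end{displaymath}
and $E$ is maximal over $\mff_{p^3}$ precisely when $3pa - a^3 = \msetf{2 p^{3/2}}$. The algebraic identity
\begin{displaymath}
2 p^{3/2} - \msetp{3pa - a^3} = \msetp{a - \sqrt{p}}^2 \msetp{a + 2\sqrt{p}}
\end{displaymath}
rewrites this as the existence of an integer $a$ with $\msetv{a} \le 2\sqrt{p}$ satisfying $0 \le \msetp{a - \sqrt{p}}^2 \msetp{a + 2\sqrt{p}} < 1$.

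Such an $a$, when it exists, is necessarily the integer nearest to $\sqrt{p}$, and the condition then simplifies to $\msetv{a - \sqrt{p}} < \msetp{3\sqrt{p}}^{-1/2}\msetp{1+o(1)}$, equivalently $\msetv{p - a^2} < (2/\sqrt{3})\, p^{1/4}(1 + o(1))$. Whenever this holds one has $0 < a < p$, so $\gcd(a, p) = 1$, and by the Deuring--Waterhouse theorem there is an ordinary elliptic curve $E$ over $\mff_p$ with trace $a$; that curve is then maximal over $\mff_{p^3}$ by construction. The theorem therefore reduces to showing that infinitely many primes $p$ lie within $O(p^{1/4})$ of some perfect square, equivalently that $\|\sqrt{p}\| \ll p^{-1/4}$ holds for infinitely many primes $p$ (where $\|\cdot\|$ denotes distance to the nearest integer).

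The main obstacle is exactly this analytic step. Equidistribution of $\msetb{\sqrt{p}} \pmod 1$ is not enough: it only produces primes with $\|\sqrt{p}\| < \epsilon$ for each fixed $\epsilon > 0$, whereas here $\epsilon$ must shrink at the rate $p^{-1/4}$. The natural attack is a Balog-type theorem on the fine distribution of $\sqrt{p}$ among the primes, obtained by combining a Harman-style prime-detecting sieve with nontrivial exponential-sum estimates for sums of the form $\sum_{p \le X} e\msetp{k\sqrt{p}}$; known techniques produce infinitely many primes $p$ with $\|\sqrt{p}\| \ll p^{-\theta}$ for some explicit $\theta > 1/4$, which is more than enough. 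Everything preceding this step is an elementary algebraic manipulation together with the Deuring--Waterhouse classification, so essentially the whole weight of the theorem rests on this analytic input.
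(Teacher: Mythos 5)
Your algebraic reduction is correct and coincides with the paper's: the identity $a^3-3pa+2\sqrt{p}^3=\msetp{a-\sqrt{p}}^2\msetp{a+2\sqrt{p}}$ turns maximality over $\mff_{p^3}$ for a curve of trace $a=\msets{\sqrt{p}}$ into the condition $\msetv{p-a^2}<\frac{2}{\sqrt{3}}p^{1/4}\msetp{1+o\msetp{1}}$, and Waterhouse's theorem supplies an ordinary curve with that trace (this is Soomro's criterion, Proposition~\ref{mc:prop:soomro}, in a different dress). You also correctly diagnose that equidistribution of $\sqrt{p}$ modulo one is useless and that one needs $\|\sqrt{p}\|\leq p^{-\theta}$ infinitely often for some $\theta>\frac{1}{4}$. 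The gap is in how you propose to get this. The ``Balog-type'' machinery you invoke --- a prime-detecting sieve combined with bounds for exponential sums $\sum_{p\leq X}e\msetp{k\sqrt{p}}$ --- is precisely the method that is known to stall at $\|\sqrt{p}\|<p^{-1/4+\varepsilon}$ (Balog, Harman, 1983), which you yourself note falls short; no theorem with exponent strictly beyond $\frac{1}{4}$ has been obtained along those lines. As written, the entire weight of the theorem rests on a black box whose advertised proof method does not deliver the required strength.

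The idea you are missing, and the paper's actual key step, is to give up on controlling $b=p-a^2$ for general $b$ and to restrict to $b$ a perfect square, $b=c^2$. Then $p=a^2+c^2$ is a prime represented with $c$ very small compared to $a$, i.e.\ a Gaussian prime in a narrow sector about the real axis, and for that problem strong results exist: Harman and Lewis (2001) produce infinitely many primes $p=a^2+c^2$ with $0\leq c<p^{0.119}$. Since $0.119<\frac{1}{8}$, for all but finitely many of these one has $a>p^{4\cdot 0.119}$ and hence $b=c^2<p^{0.238}\leq\sqrt{a}$, which is exactly Soomro's hypothesis; equivalently, this is the only known route to $\|\sqrt{p}\|\ll p^{-0.262}$, i.e.\ to any exponent past $\frac{1}{4}$. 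Without the restriction to square $b$ (or a citation of a genuine theorem achieving $\theta>\frac{1}{4}$ directly), your argument does not close.
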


This confirms an observation made by Soomro in \cite[Section~2.7]{soomro2013}
as well as our computations in Subsection~\ref{mc:ssec:algorithm}. We prove the
theorem in Section~\ref{mc:sec:max3}.

Notice that the property of $E$ to be maximal over $\mff_{q^n}$ depends only on
the isogeny class of $E$, because isogenous elliptic curves over a finite field
have the same number of points, see \cite[Lemma~15.1]{cassels1966}. The isogeny
classes of elliptic curves over $\mff_q$ correspond to integers $a_1$ such that
$\msetv{a_1}\leq 2\sqrt{q}$ and some additional conditions, see \cite
[Theorem~4.1]{waterhouse1969}. Define the integers $a_n$ as
\begin{displaymath}
a_n=q^n+1-\msetv{E\msetp{\mff_{q^n}}}.
\end{displaymath}
If $\alpha$ is an eigenvalue of Frobenius, that is a root of the polynomial
$X^2-a_1X+q$, then $a_n=\alpha^n+\bar{\alpha}^n$ with $\bar{\alpha}$ the
conjugate of $\alpha$, see \cite[Section~V.2]{silverman1985}. So, the $a_n$'s
satisfy the recurrence relation
\begin{displaymath}
a_{n+1}=a_1a_n-qa_{n-1}
\end{displaymath}
for $n$ a positive integer and $a_0=2$. Hence we reduced our question to:

\begin{question}
Let $q$ be a prime power and $a_1$ an integer such that $\msetv{a_1}\leq 2
\sqrt{q}$. Is there a positive integer $n$ such that $-a_n=\msetf{2\sqrt{q}^n}
$?
\end{question}

In this chapter $q,a_1$ are integers with $q\geq 2$ and $\msetv{a_1}\leq 2\sqrt
{q}$, $\alpha$ is a root of $X^2-a_1X+q$ and $\beta=\frac{\alpha}{\sqrt{q}}$.
Fix an embedding $\mqq\msetp{\sqrt{q},\alpha}\rightarrow\mcc$ such that $\sqrt
{q}>0$ and $\alpha$ lies in the upper half-plane, that is $\arg\msetp{\alpha}\in
\msets{0,\pi}$.

If $\beta$ is a root of unity, then the pair $q,a_1$ is called \emph
{supersingular}, otherwise the pair is called \emph{ordinary}. This definition
agrees with the one for elliptic curves whenever the pair $q,a_1$ corresponds
to an isogeny class of elliptic curves, see again \cite[Theorem~4.1]
{waterhouse1969}.

The answer to the question is divided into two cases, namely the supersingular
case and the ordinary case.

\noindent{\bfseries Acknowledgement.} This research was performed by the author
at the University of Groningen for his PhD thesis and was financially supported
by \emph{Discrete, Interactive and Algorithmic Mathematics, Algebra and Number
Theory} (DIAMANT); a mathematics cluster funded by the \emph{Netherlands
Organisation for Scientific Research} (NWO).

\section{Supersingular case}
\label{mc:sec:ssc}

The first part of Theorem~\ref{mc:thm:main} follows directly from:

\begin{proposition}
\label{mc:prop:qscm}
Let $q,a_1$ be integers with $q\geq 2$ and $\msetv{a_1}\leq 2\sqrt{q}$. If the
pair $q,a_1$ is supersingular, then $-a_n=\msetf{2\sqrt{q}^n}$ for some positive
integer $n$ if and only if
\begin{displaymath}
a_1\in\msetb{0,\sqrt{q},\pm\sqrt{2q},\pm\sqrt{3q},-2\sqrt{q}}.
\end{displaymath}
Moreover if such an integer $n$ exists, then there exist infinitely many.
\end{proposition}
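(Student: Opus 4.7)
The plan is to parametrize $\alpha=\sqrt{q}\,\beta$ with $\beta=e^{i\theta}$ a root of unity, so that
\[
a_n = \alpha^n+\bar{\alpha}^n = 2\sqrt{q}^{\,n}\cos(n\theta).
\]
The target equation $-a_n=\lfloor 2\sqrt{q}^{\,n}\rfloor$ then becomes $0\leq 2\sqrt{q}^{\,n}\bigl(1+\cos(n\theta)\bigr)<1$. Because $\beta$ has finite order, $1+\cos(n\theta)$ takes only finitely many values; any nonzero one is bounded below by a constant $\delta>0$ depending only on the order, so $2\sqrt{q}^{\,n}\delta<1$ can hold for at most finitely many $n$. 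An infinite family of solutions must therefore come from $\cos(n\theta)=-1$, which forces $-a_n=2\sqrt{q}^{\,n}$ and requires $2\sqrt{q}^{\,n}\in\mathbb{Z}$. The problem thus reduces to deciding, for each supersingular $\beta$, whether the set $\{n:\beta^n=-1\}$ is nonempty and contained in $\{n:2\sqrt{q}^{\,n}\in\mathbb{Z}\}$.

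\textbf{Classification and affirmative cases.} Next I would classify the supersingular $\beta$. The relation $\cos(2\theta)=\tfrac{1}{2}(a_1/\sqrt{q})^2-1\in\mathbb{Q}$ together with Niven's theorem restricts the order of $\beta$ to $\{1,2,3,4,6,8,12\}$ and gives
\[
a_1\in\{\pm 2\sqrt{q},\pm\sqrt{q},0,\pm\sqrt{2q},\pm\sqrt{3q}\},
\]
with the appropriate radical required to be an integer. For each of the six values listed in the proposition I would write down an explicit arithmetic progression on which $\beta^n=-1$: $n\equiv 2\pmod 4$ for $a_1=0$; $n\equiv 3\pmod 6$ for $a_1=\sqrt{q}$; $n\equiv 4\pmod 8$ for $a_1=\pm\sqrt{2q}$; $n\equiv 6\pmod{12}$ for $a_1=\pm\sqrt{3q}$; and $n$ odd for $a_1=-2\sqrt{q}$. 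Along each such progression $2\sqrt{q}^{\,n}\in\mathbb{Z}$, because either $\sqrt{q}$ is already an integer (when $q$ is a square) or $n$ is even (in the remaining cases), so infinitely many $n$ solve the equation.

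\textbf{Excluded cases.} The two supersingular values outside the list require separate treatment. When $a_1=2\sqrt{q}$, one has $\beta=1$, so $a_n=2\sqrt{q}^{\,n}>0$ and $-a_n<0\leq\lfloor 2\sqrt{q}^{\,n}\rfloor$. When $a_1=-\sqrt{q}$, $q$ is a square and $\beta$ has order $3$, so $-a_n$ equals $-2\sqrt{q}^{\,n}$ when $3\mid n$ and $\sqrt{q}^{\,n}$ otherwise; neither value matches $\lfloor 2\sqrt{q}^{\,n}\rfloor=2\sqrt{q}^{\,n}$ for $q\geq 2$ and $n\geq 1$. I expect the main obstacle to be the bookkeeping in the classification step and the case-by-case verification; the conceptual content is entirely contained in the dichotomy of the first paragraph.
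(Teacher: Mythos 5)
Your proof is correct and follows essentially the same route as the paper: both arguments reduce the equation to the dichotomy ``solutions exist precisely when $\beta^n=-1$ for some $n$, i.e.\ when $\beta$ has even order,'' and then classify the supersingular $a_1$ by the possible orders $1,2,3,4,6,8,12$, checking the two odd-order cases $a_1=2\sqrt{q}$ and $a_1=-\sqrt{q}$ directly. The only cosmetic differences are that you phrase the estimate trigonometrically and invoke Niven's theorem where the paper uses its Lemma~\ref{mc:lem:bierou} (the list of cyclotomic polynomials of degree dividing $4$), and that the paper proves the slightly sharper statement that no sporadic solutions with $\beta^n\neq -1$ exist at all (via $\msetv{\beta^m+1}\geq 1$ for odd order), whereas you rule out only infinitely many of them in general and then verify the two excluded cases by hand — which suffices.
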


The proposition above extends the result for $\mff_q$ with $q$ a square
presented in \cite[Chapter~5]{doetjes2009} to arbitrary $q\geq 2$. The new proof
uses the following results:

\begin{lemma}
\label{mc:lem:bierou}
If $\beta$ is a root of the polynomial $X^2-\frac{a_1}{\sqrt{q}}X+1$ with $q,
a_1$ integers and $q$ non-zero, then $\beta$ is a root of unity if and only if
\begin{displaymath}
a_1
\in
\msetb{0,\pm\sqrt{q},\pm\sqrt{2q},\pm\sqrt{3q},\pm 2\sqrt{q}}.
\end{displaymath}
\end{lemma}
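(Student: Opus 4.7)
Since the polynomial factors as $(X-\beta)(X-\beta^{-1})$, Vieta's formulas give $\beta+\beta^{-1}=a_1/\sqrt{q}$. For the ``if'' direction one produces, for each listed value of $a_1$, an explicit root of unity $\beta$ whose sum with its inverse equals $a_1/\sqrt{q}$: taking $\beta=\pm 1$, a primitive $3$rd or $6$th root, $\beta=\pm i$, a primitive $8$th root, or a primitive $12$th root realizes the values $\pm 2,\pm 1,0,\pm\sqrt{2},\pm\sqrt{3}$ of $\beta+\beta^{-1}=2\cos(\arg\beta)$ and hence the nine listed values of $a_1$.

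For the ``only if'' direction, suppose $\beta$ is a primitive $n$th root of unity. The key observation is that $a_1,q\in\mzz$ forces $a_1^2/q\in\mqq$, and squaring $\beta+\beta^{-1}=a_1/\sqrt{q}$ yields
\[
\beta^2+\beta^{-2}=\frac{a_1^2}{q}-2\in\mqq.
\]
Setting $m=n/\gcd(n,2)$, the element $\beta^2$ is a primitive $m$th root of unity. If $m\geq 3$, the rationality of $\beta^2+\beta^{-2}$ shows that $\beta^2$ satisfies a monic quadratic over $\mqq$, so $\varphi(m)=[\mqq(\beta^2):\mqq]\leq 2$ and thus $m\in\{3,4,6\}$. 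The residual cases $m\in\{1,2\}$ force $\beta^4=1$ and hence $n\in\{1,2,4\}$. Combining both, $n\in\{1,2,3,4,6,8,12\}$.

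The lemma is then wrapped up by a finite enumeration: for each such $n$, listing the primitive $n$th roots of unity and computing $\beta+\beta^{-1}$ recovers the same nine values appearing in the ``if'' direction and hence the claimed list of $a_1$. The step requiring the most care is the rationality argument: a cruder bound $\varphi(n)=[\mqq(\beta):\mqq]\leq 4$, which one gets from the fact that $\beta$ lies in the at-most-quadratic extension $\mqq(\sqrt{q})(\beta)$ of $\mqq(\sqrt{q})$, would also admit $n\in\{5,10\}$, and it is precisely the passage to $\beta^2+\beta^{-2}\in\mqq$ that rules these out.
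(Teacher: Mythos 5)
Your proposal is correct. The overall skeleton — reduce to finitely many possible orders $n$, then enumerate — matches the paper's, and your ``if'' direction is essentially the paper's verification that for each listed $a_1$ the quadratic (or the quartic it divides) is cyclotomic, phrased via the traces $\beta+\beta^{-1}=2\cos(\arg\beta)$. Where you genuinely diverge is in how the orders $5$ and $10$ are excluded. The paper starts from $[\mqq(\beta):\mqq]\in\{1,2,4\}$, so $\varphi(n)$ divides $4$ and $n\in\{1,2,3,4,5,6,8,10,12\}$; it then observes that $\beta$ is a root of the rational quartic $X^4+(2-\frac{a_1^2}{q})X^2+1$ and, for the degree-$4$ orders, compares this with the cyclotomic polynomials $\Phi_5,\Phi_8,\Phi_{10},\Phi_{12}$: the absence of odd-degree terms kills $n=5,10$, and the coefficient of $X^2$ pins down $a_1=\pm\sqrt{2q}$ and $\pm\sqrt{3q}$ for $n=8,12$. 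You instead pass to $\beta^2$, note that $\beta^2+\beta^{-2}=\frac{a_1^2}{q}-2\in\mqq$ forces $[\mqq(\beta^2):\mqq]\leq 2$, hence $\varphi(\mathrm{ord}(\beta^2))\leq 2$, and never encounter $5$ or $10$ at all. The two arguments exploit the same underlying fact — the rational quartic is even in $X$ — but yours packages it as a degree bound on $\mqq(\beta^2)$ rather than a coefficient comparison, which dispenses with the table of degree-$4$ cyclotomic polynomials at the cost of the short order computation $m=n/\gcd(n,2)$. One cosmetic point: in the ``if'' direction you should make explicit (as your opening factorization $(X-\beta)(X-\beta^{-1})$ already implies) that exhibiting one root of unity with the correct trace forces \emph{both} roots of the quadratic to be roots of unity, since the second root is the inverse of the first.
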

\begin{proof}
Suppose that $\beta$ is a primitive root of unity of order $n$. Let $\varphi$
denote Euler's function, then $\msets{\mqq\msetp{\beta}:\mqq}=\varphi
\msetp{n}$. Since $\msets{\mqq\msetp{\sqrt{q},\beta}:\mqq}\in\msetb{1,2,4}$,
the same is true for $\msets{\mqq\msetp{\beta}:\mqq}$. The cyclotomic
polynomials of degree dividing 4 are listed in Table~\ref{mc:tab:eprud4}.
Evaluate $X^2-\frac{a_1}{\sqrt{q}}X+1$ in a primitive root of unity $\zeta_n$
of order $n$ for $n=1,2,3,4,6$ to obtain $a_1=2\sqrt{q},-2\sqrt{q},-\sqrt{q},0,
\sqrt{q}$ respectively. Notice that $\beta$ is also a root of $X^4+\msetp{2-
\frac{a_1^2}{q}}X^2+1$, and this polynomial and the cyclotomic polynomial both
have degree 4 for $n=5,8,10,12$. This implies that $a_1=\pm\sqrt{2q},\pm\sqrt
{3q}$ for $n=8,12$ respectively, and that the cases $n=5,10$ are impossible.
Hence $a_1$ is as desired.

Assume that
\begin{displaymath}
a_1\in\msetb{0,\pm\sqrt{q},\pm\sqrt{2q},\pm\sqrt{3q},\pm 2\sqrt{q}}.
\end{displaymath}
If $a_1=\pm 2\sqrt{q}$, then $X^2-\frac{a_1}{\sqrt{q}}X+1=\msetp{X\mp 1}^2$,
that is $\beta$ is a root of unity. Since $\beta$ is a root of $X^2-\frac{a_1}
{\sqrt{q}}X+1$, $\beta$ is also a root of $X^4+\msetp{2-\frac{a_1^2}{q}}X^2+1$.
If $a_1\neq\pm 2\sqrt{q}$, then one of both polynomials is listed Table~\ref
{mc:tab:eprud4}, that is $\beta$ is a root of unity. Hence in either case
$\beta$ is a root of unity.
\end{proof}

\begin{table}
\begin{center}
\caption{The list of all cyclotomic polynomials $\Phi_n$ of degree $d$ dividing
4. Recall that $\varphi\msetp{n}=\msets{\mqq\msetp{\zeta_n}:\mqq}=d$.}
\begin{tabular}{ccr}
$d$ & $n$ & \multicolumn{1}{c}{$\Phi_n$} \\
\hline
1 &  1 & $X-1$ \\
  &  2 & $X+1$ \\
\hline
2 &  3 & $X^2+X+1$ \\
  &  4 & $X^2+1$ \\
  &  6 & $X^2-X+1$ \\
\hline
4 &  5 & $X^4+X^3+X^2+X+1$ \\
  &  8 & $X^4+1$ \\
  & 10 & $X^4-X^3+X^2-X+1$ \\
  & 12 & $X^4-X^2+1$
\end{tabular}
\label{mc:tab:eprud4}
\end{center}
\end{table}

\begin{lemma}
\label{mc:lem:translation}
Let $q,a_1$ be integers with $q$ positive and $\msetv{a_1}\leq 2\sqrt{q}$. If
$n$ is a positive integer, then
\begin{displaymath}
-a_n=\msetf{2\sqrt{q}^n}
\quad\Longleftrightarrow\quad
\msetv{\beta^n+1}<\frac{1}{\sqrt[4]{q}^n}.
\end{displaymath}
\end{lemma}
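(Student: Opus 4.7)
The plan is to translate the floor-function equality into a quantitative bound on $|\beta^n+1|$ via the identity $|\beta|=1$. Since $|a_1|\leq 2\sqrt q$, the roots $\alpha,\bar\alpha$ of $X^2-a_1X+q$ satisfy $\alpha\bar\alpha=q$ and are complex conjugates, so $|\alpha|=\sqrt q$ and hence $|\beta|=1$. Consequently $\bar\beta=\beta^{-1}$ and $\overline{\beta^n}=\bar\beta^n$, so
\begin{displaymath}
\msetv{\beta^n+1}^2
=\msetp{\beta^n+1}\msetp{\bar\beta^n+1}
=2+\beta^n+\bar\beta^n
=2+\frac{\alpha^n+\bar\alpha^n}{\sqrt q^n}
=2+\frac{a_n}{\sqrt q^n}.
\end{displaymath}
This is the key identity: it converts the quantity on the right-hand side of the claimed equivalence into an affine function of $a_n/\sqrt q^n$.

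Next I would unpack the floor equation. Because $a_n$ is an integer, $-a_n=\msetf{2\sqrt q^n}$ is equivalent to the pair of inequalities $-a_n\leq 2\sqrt q^n$ and $-a_n+1>2\sqrt q^n$, i.e.
\begin{displaymath}
a_n\geq -2\sqrt q^n
\quad\text{and}\quad
a_n+2\sqrt q^n<1.
\end{displaymath}
The first inequality is the Hasse lower bound, which holds automatically for every $n$, and translates via the identity above to $\msetv{\beta^n+1}^2\geq 0$. The second inequality, divided by $\sqrt q^n>0$, becomes $a_n/\sqrt q^n+2<1/\sqrt q^n$, which by the identity is exactly $\msetv{\beta^n+1}^2<1/\sqrt q^n$, i.e.\ $\msetv{\beta^n+1}<1/\sqrt[4]{q}^n$. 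Thus the floor equation is equivalent to the desired inequality.

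I don't foresee a real obstacle here; the statement is essentially a bookkeeping reformulation. The only points requiring a bit of care are to observe that $|\beta|=1$ (which needs $|a_1|\leq 2\sqrt q$, not just $|a_1|<2\sqrt q$, but equality still gives $\beta=\pm 1$, for which the identity remains valid), and to handle the floor function by splitting it into the two inequalities above, noting that one direction is automatic from the Hasse bound.
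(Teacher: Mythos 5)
Your proof is correct and follows essentially the same route as the paper: both reduce the floor equation to $0\leq a_n+2\sqrt{q}^n<1$, note the lower inequality is automatic, and convert the upper one via the identity $\sqrt{q}^n\msetv{\beta^n+1}^2=a_n+2\sqrt{q}^n$ (you write it as $\msetv{\beta^n+1}^2=2+a_n/\sqrt{q}^n$, the paper as $a_n+2\sqrt{q}^n=\bar{\alpha}^n\msetp{\beta^n+1}^2$, which is the same thing). The only cosmetic quibble is that in this purely arithmetic setting the bound $a_n\geq-2\sqrt{q}^n$ comes from $a_n=\alpha^n+\bar{\alpha}^n$ with $\msetv{\alpha}=\sqrt{q}$ (or, as you also note, from nonnegativity of $\msetv{\beta^n+1}^2$) rather than from the Hasse bound per se.
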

\begin{proof}
Notice that $-a_n=\msetf{2\sqrt{q}^n}$ is equivalent to $-a_n\leq 2\sqrt{q}^n<
-a_n+1$, which is the same as $0\leq a_n+2\sqrt{q}^n<1$. Since $\msetv{a_n}\leq
2\sqrt{q}^n$ implies $0\leq a_n+2\sqrt{q}^n$, in fact $-a_n=\msetf{2\sqrt{q}^n}$
if and only if $\msetv{a_n+2\sqrt{q}^n}<1$.

Recall that $a_n=\alpha^n+\bar{\alpha}^n$ and $\msetv{\alpha}=\sqrt{q}$ and
$\beta=\frac{\alpha}{\msetv{\alpha}}$. Observe that
\begin{displaymath}
a_n+2\sqrt{q}^n
=\alpha^n+\bar{\alpha}^n+2\sqrt{q}^n
=\bar{\alpha}^n\msetp{\beta^{2n}+1+2\beta^n}
=\bar{\alpha}^n\msetp{\beta^n+1}^2.
\end{displaymath}
Substitute this relation in the last inequality to complete the proof.
\end{proof}

\begin{proof}[Proof of Proposition~\ref{mc:prop:qscm}]
Suppose that $\msetv{\beta^n+1}<\frac{1}{\sqrt[4]{q}^n}$ for some positive
integer $n$ and $\beta^m+1\neq 0$ for all integers $m$. Recall that $\beta$ is
a root of $X^2-\frac{a_1}{\sqrt{q}}X+1$ and by assumption $\beta$ is also a
root of unity. Thus the order of $\beta$ is odd. According to Lemma~\ref
{mc:lem:bierou} and its proof $\beta$ has order 1 or 3. If the order is 1, then
$\msetv{\beta^m+1}=2$ for all integers $m$. If the order is 3, then $\msetv{
\beta^m+1}\geq 1$ for all integers $m$. In either case this contradicts $\msetv
{\beta^n+1}<\frac{1}{\sqrt[4]{q}^n}$. Hence for $n$ a positive integer
\begin{displaymath}
\msetv{\beta^n+1}<\frac{1}{\sqrt[4]{q}^n}
\quad\Longleftrightarrow\quad
\beta^n+1=0.
\end{displaymath}

Lemma~\ref{mc:lem:bierou} implies that $\beta^n+1=0$ for some positive integer
$n$ if and only if the order of $\beta$ is even if and only if $a_1\in\msetb{0,
\sqrt{q},\pm\sqrt{2q},\pm\sqrt{3q},-2\sqrt{q}}$.

The proposition follows from Lemma~\ref{mc:lem:translation}.
\end{proof}

\section{Ordinary case}
\label{mc:sec:ordc}

The first result restricting the possible values of $q$ and $n$ in this case is:

\begin{proposition}
\label{mc:prop:oqnsno}
Let $q,a_1$ be integers with $q\geq 2$ and $\msetv{a_1}\leq 2\sqrt{q}$. If the
pair $q,a_1$ is ordinary and $-a_n=\msetf{2\sqrt{q}^n}$ for some positive
integer $n$, then $q$ is not a square and $n$ is odd.
\end{proposition}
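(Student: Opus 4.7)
The plan is to handle both claims uniformly via the elementary observation that $2\sqrt{q}^n$ is a rational integer precisely when $q$ is a perfect square or $n$ is even; otherwise $\sqrt{q}^n$ is irrational. Thus it suffices to show that in the ordinary case, $-a_n=\msetf{2\sqrt{q}^n}$ together with $2\sqrt{q}^n\in\mzz$ forces $\beta$ to be a root of unity, which contradicts the ordinariness of the pair $q,a_1$.

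The first step, essentially the opening paragraph of the proof of Lemma~\ref{mc:lem:translation}, is to rewrite the floor condition as $0\leq a_n+2\sqrt{q}^n<1$, where the lower bound uses $\msetv{a_n}\leq 2\sqrt{q}^n$. Under the assumption $2\sqrt{q}^n\in\mzz$, the integrality of $a_n$ makes $a_n+2\sqrt{q}^n$ a nonnegative integer strictly less than $1$, hence equal to $0$. So $a_n=-2\sqrt{q}^n$ exactly.

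The second step is to translate this equality into a statement about $\beta$. Since $\msetv{\alpha}=\sqrt{q}$, one has $\bar{\alpha}=q/\alpha$, and therefore $a_n=\alpha^n+\bar{\alpha}^n=\sqrt{q}^n\msetp{\beta^n+\beta^{-n}}$. Substituting into $a_n=-2\sqrt{q}^n$ yields $\beta^n+\beta^{-n}=-2$, or equivalently $\msetp{\beta^n+1}^2=0$ after multiplying by $\beta^n$. Hence $\beta^n=-1$, so $\beta^{2n}=1$ and $\beta$ is a root of unity, contradicting the ordinary hypothesis and completing the proof of both assertions.

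I do not foresee a serious obstacle: the entire argument rests on the arithmetic observation about when $2\sqrt{q}^n$ lies in $\mzz$, together with the identity $a_n=\sqrt{q}^n\msetp{\beta^n+\beta^{-n}}$, which was already the workhorse of Lemma~\ref{mc:lem:translation}.
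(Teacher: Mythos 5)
Your argument is correct and is essentially the paper's own proof: the paper likewise observes that if $q$ is a square or $n$ is even then $\msetf{2\sqrt{q}^n}=2\sqrt{q}^n$, and then invokes (the computation inside) Lemma~\ref{mc:lem:translation} to conclude $\beta^n+1=0$, contradicting ordinariness. You have merely inlined that computation via the equivalent identity $a_n=\sqrt{q}^n\msetp{\beta^n+\beta^{-n}}$, so there is nothing substantively different to report.
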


\begin{proof}
Assume that $-a_n=\msetf{2\sqrt{q}^n}$ for some positive integer $n$. Recall
that $\beta=\frac{\alpha}{\msetv{\alpha}}$. If $q$ is a square or $n$ is even,
then $\msetf{2\sqrt{q}^n}=2\sqrt{q}^n$, that is $\beta^n+1=0$ (see Lemma~\ref
{mc:lem:translation}). However by assumption $\beta$ is not a root of unity.
\end{proof}

\subsection{Upper bound on the degree}

Given an ordinary pair $q,a_1$ we derive an upper bound on the $n$'s such that
$-a_n=\msetf{2\sqrt{q}^n}$ using an estimate for linear forms in two logarithms
from~\cite{laurent1995}.

\begin{proposition}
\label{mc:prop:laurentbound}
For every integer $q\geq 2$ let $N_q$ be the unique zero of
\begin{displaymath}
n\longmapsto
\frac{n}{4}\log\msetp{q}-8.87\msetp{10.98\pi+\frac{1}{2}\log\msetp{q}}\msetp
{2\log\msetp{n}+3.27}^2-\log\msetp{\frac{\pi}{3}}
\end{displaymath}
larger than $8007$.
\begin{enumerate}
\item The sequence $\msetb{N_q}_{q\geq 2}$ decreases monotonically.
\item If the pair of integers $q,a_1$ with $q\geq 2$ and $\msetv{a_1}\leq 2
\sqrt{q}$ is ordinary and $-a_n=\msetf{2\sqrt{q} ^n}$ for some $n$, then $n<
N_q$.
\end{enumerate}
\end{proposition}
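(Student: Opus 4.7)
I treat part (2) first. By Lemma~\ref{mc:lem:translation} the hypothesis $-a_n=\msetf{2\sqrt q^n}$ is equivalent to $\msetv{\beta^n+1}<q^{-n/4}$, and by Proposition~\ref{mc:prop:oqnsno} $q$ is not a square, so $\beta$ lies on the unit circle but is not a root of unity. Writing $\beta=e^{i\theta}$ with $\theta\in(0,\pi)$, the identity $\msetv{1+\beta^n}=2\msetv{\sin(\epsilon/2)}$ with $\epsilon=n\theta-(2k+1)\pi$ and $k$ chosen to minimise $\msetv{\epsilon}$, combined with the elementary bound $\msetv{\sin x}\geq (3/\pi)\msetv{x}$ valid on $\msetv{x}\leq\pi/6$, yields $\msetv{\epsilon}<(\pi/3)q^{-n/4}$ once $q^{-n/4}$ is small enough to place $\epsilon/2$ inside the range of this sine estimate. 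Setting $\Lambda=n\log\beta-(2k+1)\log(-1)$ with the principal determinations $\log\beta=i\theta$ and $\log(-1)=i\pi$, we have $\Lambda=i\epsilon$, and $\Lambda\neq 0$ because $\beta$ is not a root of unity. Hence the nonzero linear form in two logarithms satisfies
\[
\log\msetv{\Lambda}<\log(\pi/3)-\tfrac{n}{4}\log q.
\]

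Next I would apply Laurent's estimate~\cite{laurent1995} to $\Lambda$ with $\alpha_1=\beta$ and $\alpha_2=-1$. The minimal polynomial of $\beta$ over $\mqq$ is $qX^4+(2q-a_1^2)X^2+q$; irreducibility follows from $q$ being non-square and $a_1\neq 0$ (otherwise $\beta^2+1=0$ would make $\beta$ a root of unity). Since all four roots lie on the unit circle, the Mahler measure equals the leading coefficient $q$, giving $h(\beta)=\tfrac14\log q$ and $D=[\mqq(\beta,-1):\mqq]=4$, while $h(-1)=0$ and $\msetv{\log(-1)}=\pi$. The coefficients $b_1=n$ and $b_2=2k+1$ are positive integers both bounded by $n$ (using $\theta<\pi$). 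Substituting these data into Laurent's two-logarithm bound produces
\[
\log\msetv{\Lambda}\geq -8.87\bigl(10.98\pi+\tfrac12\log q\bigr)\bigl(2\log n+3.27\bigr)^2,
\]
where the combination $10.98\pi+\tfrac12\log q$ packages the height parameters $\log A_1,\log A_2$ (with $\pi$ coming from $\msetv{\log(-1)}$ and $\tfrac12\log q$ from $Dh(\beta)=\log q$), and the factor $(2\log n+3.27)^2$ comes from the $\log b'$ term after using $b_1,b_2\leq n$. Combining the upper and lower bounds on $\log\msetv{\Lambda}$ and rearranging produces $F(q,n)<0$, where $F$ is the function defining $N_q$, hence $n<N_q$.

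For part (1), differentiating gives $\partial F/\partial(\log q)=n/4-4.435(2\log n+3.27)^2$. This vanishes at some $n_0$ slightly above $8007$ (a direct numerical check) and is strictly positive and strictly increasing for $n>n_0$ (its own $n$-derivative $\tfrac14-17.74(2\log n+3.27)/n$ is positive there). At $n=n_0$ the $\log q$ contributions to $F$ cancel by the defining equation of $n_0$, leaving $F(q,n_0)=-8.87\cdot 10.98\pi\cdot(2\log n_0+3.27)^2-\log(\pi/3)<0$ independently of $q$. Hence $N_q>n_0>8007$ for every $q$, and in particular $\partial F/\partial(\log q)(N_q)>0$. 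A parallel computation shows $\partial F/\partial n(N_q)>0$ (using $2\log N_q+3.27>4$ together with $F(q,N_q)=0$), so $N_q$ is a simple zero across which $F$ transitions from negative to positive. For $q'>q$ we therefore have $F(q',N_q)>F(q,N_q)=0$, forcing the unique zero of $F(q',\cdot)$ above $8007$ to lie strictly to the left of $N_q$, i.e.\ $N_{q'}<N_q$.

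The only genuinely delicate step is threading Laurent's theorem~\cite{laurent1995} through so that the output constants emerge as $8.87$, $10.98\pi$, and $3.27$ on the nose; everything else is elementary arithmetic or calculus. The choice $\alpha_2=-1$ (with vanishing height but nonzero $\msetv{\log(-1)}$) is what pins $\pi$ into the bound, and the minimal polynomial computation for $\beta$ is what pins $\tfrac12\log q$.
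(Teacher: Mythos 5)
Your proposal follows the paper's route essentially step for step: part (2) is Lemma~\ref{mc:lem:translation} plus the argument/sine estimate (which is the paper's Lemma~\ref{mc:lem:logupperbound}) plus Laurent's two-logarithm theorem applied to $n\log\beta-m\log(-1)$ with $[\mathbb{Q}(\beta):\mathbb{Q}]=4$ and logarithmic height at most $\tfrac14\log q$ (the paper's Lemmas~\ref{mc:lem:ordbetamp} and~\ref{mc:lem:laurent}), and part (1) rests on the same observation that $q\cdot\partial f/\partial q=\tfrac n4-4.435(2\log n+3.27)^2$ is independent of $q$ and positive beyond its unique zero near $8007$. Your uniqueness argument for $N_q$ (positive $n$-derivative at every zero above $8007$) is a valid minor variation on the paper's strict-convexity argument.

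Two points need repair. First, Laurent's estimate does not give the lower bound $-8.87\bigl(10.98\pi+\tfrac12\log q\bigr)\bigl(2\log n+3.27\bigr)^2$ outright: the correct form involves $\max\bigl\{17,\,2\log n+3.27\bigr\}^2$, and your version is false for $n<e^{6.865}\approx 958$ where the maximum is $17$. Consequently the chain ``combining the bounds produces $F(q,n)<0$, hence $n<N_q$'' is only justified for $n\geq 958$; the paper avoids this by arguing by contradiction from $n\geq N_q>8007>958$, and in your setup one must add that for $n<958$ the conclusion $n<N_q$ is automatic. Second, the derivation of the constants $8.87$, $10.98\pi$ and $3.27$ from Laurent's Th\'eor\`eme~3 --- which you explicitly defer as ``the delicate step'' --- is exactly the content of the paper's Lemma~\ref{mc:lem:laurent} (with $d=\tfrac12[\mathbb{Q}(\beta):\mathbb{Q}]=2$ one gets $d\log n-0.88d+5.03=2\log n+3.27$ and $dl=\tfrac12\log q$); as written your proof asserts this bound rather than proves it, and your bookkeeping ($Dh(\beta)=\log q$ versus the $\tfrac12\log q$ appearing in the bound) signals that the parameters of Laurent's theorem have not actually been checked. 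Since that lemma is available in the paper, citing it closes the gap.
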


We computed the value of $N_q$ for several $q$ and list them in Table~\ref
{mc:tbl:qfnq}. In the case $q=3$ Doetjes mentioned~\cite[p. 25]{doetjes2009}
that for $a_1=-2$ and $a_1=1$ there are no $n<10^6$ such that $-a_n=\msetf{2
\sqrt{q}^n}$, and he expected that such $n$ do not exist at all. Since his
argument extends to $n<2998887$, our upper bound on $n$ shows that his
observation is correct.

\begin{table}
\begin{center}
\caption{The value of $\msetf{N_q}$ for various $q$.}
\begin{tabular}{cc|cc}
$q$ & $\msetf{N_q}$ & $q$ & $\msetf{N_q}$ \\
\hline
2 & 1840001 & $10^3$ & 142072 \\
3 & 1093182 & $10^4$ & 104910 \\
10 & 475174 & $10^5$ & 83424 \\
$10^2$ & 220290 & $10^6$ & 69510
\end{tabular}
\label{mc:tbl:qfnq}
\end{center}
\end{table}

We denote the principal value of the argument and the complex logarithm by
$\arg$ and $\log$ respectively.

\begin{lemma}
\label{mc:lem:logupperbound}
Let $q,a_1$ be integers with $q$ positive and $\msetv{a_1}\leq 2\sqrt{q}$. If
$n$ is a positive integer such that $-a_n=\msetf{2\sqrt{q}^n}$, then
\begin{displaymath}
\msetv{m\pi+n\arg\msetp{\beta}}
=\msetv{\arg\msetp{-\beta^n}}
<\frac{\pi}{3}\frac{1}{\sqrt[4]{q}^n}
\end{displaymath}
for some odd integer $m$ such that $\msetv{m}\leq n$.
\end{lemma}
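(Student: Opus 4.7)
The plan is to convert the estimate $\msetv{\beta^n+1}<\frac{1}{\sqrt[4]{q}^n}$ supplied by Lemma~\ref{mc:lem:translation} into a bound on the principal argument $\arg\msetp{-\beta^n}$, and then to rewrite that argument in the form $m\pi+n\arg\msetp{\beta}$ with $m$ odd and $\msetv{m}\leq n$. Throughout I would use that $\msetv{\beta}=1$, so $-\beta^n$ lies on the unit circle and its argument is well defined whenever $\beta^n+1\neq 0$ (the case $\beta^n+1=0$ makes every assertion trivial).

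For the argument estimate, I would set $\phi=\arg\msetp{-\beta^n}\in(-\pi,\pi]$, so that $-\beta^n=e^{i\phi}$ and the chord-length formula gives
\begin{displaymath}
\msetv{\beta^n+1}=\msetv{e^{i\phi}-1}=2\msetv{\sin\msetp{\phi/2}}.
\end{displaymath}
Since $\msetv{\phi/2}\leq\pi/2$, this inverts to $\msetv{\phi/2}=\arcsin\msetp{\msetv{\beta^n+1}/2}$. The analytic input I would invoke is the convexity of $\arcsin$ on $\msets{0,1}$ (its second derivative is $x\msetp{1-x^2}^{-3/2}\geq 0$), which forces the function to lie below its chord on the subinterval $\msets{0,1/2}$; that is, $\arcsin\msetp{x}\leq\msetp{\pi/3}x$. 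Because $\msetv{\beta^n+1}/2<\frac{1}{2\sqrt[4]{q}^n}\leq 2^{-5/4}<1/2$ for every $q\geq 2$ and $n\geq 1$, this bound applies and yields $\msetv{\phi}<\frac{\pi}{3}\frac{1}{\sqrt[4]{q}^n}$.

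For the algebraic step, I would set $\theta=\arg\msetp{\beta}\in\msets{0,\pi}$ by the embedding convention, so $-\beta^n=e^{i\msetp{n\theta+\pi}}$. Hence $\phi$ and $n\theta+\pi$ differ by an even integer multiple of $\pi$, and writing $\phi=m\pi+n\theta$ forces $m$ to be odd. The constraint $\phi\in(-\pi,\pi]$ translates into $m\pi\in(-\pi-n\theta,\pi-n\theta]\subseteq(-\msetp{n+1}\pi,\pi]$, whence $-n\leq m\leq 1$ and therefore $\msetv{m}\leq n$ for $n\geq 1$. The only subtlety in the whole argument is the constant $\pi/3$: a crude bound such as $\arcsin\msetp{x}\leq\msetp{\pi/2}x$ on $\msets{0,1}$ would produce only $\pi/2$, and extracting the sharper factor $\pi/3$ relies essentially on $q\geq 2$ keeping $\msetv{\beta^n+1}/2$ below $1/2$ so that the tighter chord estimate is available.
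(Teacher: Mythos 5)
Your proof is correct and follows essentially the same route as the paper: invoke Lemma~\ref{mc:lem:translation}, convert $\msetv{\beta^n+1}$ into a bound on $\arg\msetp{-\beta^n}$ via the chord identity $\msetv{e^{i\phi}-1}=2\msetv{\sin\msetp{\phi/2}}$ together with a linear comparison (your $\arcsin\msetp{x}\leq\frac{\pi}{3}x$ on $\msets{0,\tfrac{1}{2}}$ is the inverse of the paper's $\msetv{\sin\msetp{\phi}}\geq\frac{3}{\pi}\msetv{\phi}$ on $\msetv{\phi}\leq\frac{\pi}{6}$), and then write $\arg\msetp{-\beta^n}=m\pi+n\arg\msetp{\beta}$ with $m$ odd. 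The only cosmetic difference is that you bound $\msetv{m}\leq n$ from the principal-value range of $\arg$ while the paper uses the triangle inequality with the $\frac{\pi}{3}$ estimate; both are valid.
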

\begin{proof}
Assume that $-a_n=\msetf{2\sqrt{q}^n}$ for some positive integer $n$. Since
$\msetv{\beta}=1$ by construction and $\msetv{\beta^n+1}<\frac{1}{\sqrt[4]{q}
^n}<1$ by Lemma~\ref{mc:lem:translation}, $\msetv{\arg\msetp{-\beta^n}}<\frac
{\pi}{3}$. Use $\msetv{\sin\msetp{\phi}}\geq\frac{3}{\pi}\msetv{\phi}$ for
$\msetv{\phi}\leq\frac{\pi}{6}$ and $\frac{1}{2}\msetv{z-1}=\msetv{\sin\msetp
{\frac{1}{2}\arg\msetp{z}}}$ for $\msetv{z}=1$ to obtain
\begin{displaymath}
\msetv{\arg\msetp{-\beta^n}}
<\frac{\pi}{3}\frac{1}{\sqrt[4]{q}^n}.
\end{displaymath}
Notice that
\begin{displaymath}
\arg\msetp{-\beta^n}
=\arg\msetp{-1}+n\arg\msetp{\beta}+2\pi k
=\msetp{2k+1}\pi+n\arg\msetp{\beta}
\end{displaymath}
for some integer $k$. Define $m=2k+1$. Since $\msetv{\arg\msetp{\beta}}\leq\pi$
and $\msetv{\arg\msetp{-\beta^n}}<\frac{\pi}{3}$,
\begin{displaymath}
\msetv{m}\pi
=\msetv{\arg\msetp{-\beta^n}-n\arg\msetp{\beta}}
\leq\msetv{\arg\msetp{-\beta^n}}+n\msetv{\arg\msetp{\beta}}
<\msetp{n+\frac{1}{3}}\pi,
\end{displaymath}
that is $\msetv{m}\leq n$ as $m,n$ are integers.
\end{proof}

The \emph{logarithmic height} of an algebraic number $\beta$ is defined as
\begin{displaymath}
\frac{1}{n}\msetp{\log\msetv{b}+\sum_{i=1}^n\log\max\msetb{1,\msetv{\beta_i}}}
\end{displaymath}
with $b\prod_{i=1}^n\msetp{X-\beta_i}$ the minimal polynomial of $\beta$ over
$\mzz$.

\begin{lemma}
\label{mc:lem:laurent}
Let $\beta$ be an algebraic number of absolute value one. If $\beta$ is not a
root of unity, $n$ a positive integer and $m$ a non-zero integer such that
$\msetv{m}\leq n$, then $\log\msetv{m\pi+n\arg\msetp{\beta}}$ is at least
\begin{displaymath}
-8.87\msetp{10.98\pi+dl}
\max\msetb{17,\frac{\sqrt{d}}{10},d\log\msetp{n}-0.88d+5.03}^2,
\end{displaymath}
where $l$ is an upper bound on the logarithmic height of $\beta$ and $d=\frac
{1}{2}\msets{\mqq\msetp{\beta}:\mqq}$.
\end{lemma}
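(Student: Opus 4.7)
The plan is to rewrite $m\pi+n\arg\msetp{\beta}$ as the modulus of a linear form in the logarithms of two algebraic numbers and then invoke the explicit estimate for linear forms in two logarithms proved in \cite{laurent1995}.

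Since $\msetv{\beta}=1$, the principal branch of the complex logarithm satisfies $\log\msetp{\beta}=i\arg\msetp{\beta}$, and of course $\log\msetp{-1}=i\pi$. Set
\[
\Lambda:=m\log\msetp{-1}+n\log\msetp{\beta}=i\msetp{m\pi+n\arg\msetp{\beta}},
\]
so that $\msetv{\Lambda}=\msetv{m\pi+n\arg\msetp{\beta}}$. The assumption that $\beta$ is not a root of unity guarantees $\Lambda\neq 0$, hence $\log\msetv{\Lambda}$ is a finite real number. Laurent's lower bound then applies with algebraic numbers $\alpha_1=-1$, $\alpha_2=\beta$ and integer coefficients $b_1=m$, $b_2=n$.

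Next I would translate the hypotheses of the main theorem of \cite{laurent1995} into our setting. The logarithmic height of $-1$ is $0$, while that of $\beta$ is at most $l$. Because $\beta$ has modulus one and is not a root of unity, it is non-real, so $\mqq\msetp{\beta}$ is a totally complex quadratic extension of its maximal real subfield; together with $\msets{\mqq\msetp{\beta}:\mqq}=2d$ this explains why $d=\tfrac12\msets{\mqq\msetp{\beta}:\mqq}$ is the correct ``reduced degree'' parameter in the two-logarithm estimate. The coefficient bound $\msetv{b_i}\leq n$ is immediate from $\msetv{m}\leq n$, and the obvious estimates $\msetv{\log\msetp{-1}}=\pi$, $\msetv{\log\msetp{\beta}}\leq\pi$ feed into the required lower bounds on the auxiliary reals $\log A_1$, $\log A_2$. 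Substituting these quantities and the admissible choice of $\log B$ into Laurent's formula yields an inequality of the shape
\[
\log\msetv{\Lambda}\geq-C_1\msetp{C_2\pi+dl}\,\msetp{\max\msetb{\ldots,d\log\msetp{n}+\ldots}}^2,
\]
which is then matched to the statement by picking the standard minimal admissible values of $\log A_1$, $\log A_2$ and $\log B$ in \cite{laurent1995}.

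The main obstacle is purely bookkeeping. One has to select the auxiliary parameters so that (i) all constraints of the type $\log A_i\geq\max\msetb{h\msetp{\alpha_i},\msetv{\log\msetp{\alpha_i}}/D,1/D}$ together with the analogous lower bound on $\log B$ are met with equality, and (ii) the resulting numerical constants collapse to exactly $-8.87\msetp{10.98\pi+dl}$ together with the specific threshold $\max\msetb{17,\sqrt{d}/10,d\log\msetp{n}-0.88d+5.03}$. The three entries of this maximum correspond one-to-one with the three admissibility conditions placed on $\log B$ in the relevant corollary of \cite{laurent1995}. No new idea beyond faithful substitution is required, but care must be taken to verify that these minimal choices really do reproduce the numerical constants $8.87$, $10.98$, $0.88$ and $5.03$ stated here.
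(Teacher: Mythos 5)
Your strategy is the same as the paper's: both proofs obtain the bound by applying Th\'eor\`eme~3 of \cite{laurent1995} to the linear form $m\pi i+n\log\msetp{\beta}$, with the degree parameter $D$ of that theorem equal to $d=\frac{1}{2}\msets{\mqq\msetp{\beta}:\mqq}$ because $\beta$ is non-real, the height of $-1$ equal to $0$, and the height of $\beta$ bounded by $l$; the two intermediate bounds the paper records, namely $20\leq a\leq 10.98\pi+dl$ and $H\leq\max\msetb{17,\frac{\sqrt{d}}{10},d\log\msetp{n}-0.88d+5.03}$, are exactly the bookkeeping you describe. One step you omit that the paper needs: Laurent's theorem is stated for a form $b_2\log\msetp{\alpha_2}-b_1\log\msetp{\alpha_1}$ with \emph{positive} integer coefficients, so taking $b_1=m$ directly is not admissible when $m>0$. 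The paper fixes this by first replacing $\beta$ with $\bar{\beta}$ and $m$ with $-m$ where necessary (using that conjugation preserves the logarithmic height and negates the argument) and then applying the theorem to $\msetv{m}\pi i-n\log\msetp{\beta}$. Apart from that normalization, and the numerical verification of the constants $8.87$, $10.98$, $0.88$, $5.03$ which you correctly identify but defer, your outline matches the paper's proof.
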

\begin{proof}
Since the logarithmic heights of $\beta$ and $\bar{\beta}$ are equal and $\arg
\msetp{\bar{\beta}}=-\arg\msetp{\beta}$, replace $\beta$ by $\bar{\beta}$ and
$m$ by $-m$ as necessary to reduce to the case of negative $m$.

Let $a$ and $H$ be as in \cite[Th\'eor\`eme~3]{laurent1995}. Observe that
\begin{displaymath}
20\leq
a\leq 10.98\pi+dl
\end{displaymath}
and using $20\leq a$ and $\msetv{m}\leq n$ that
\begin{displaymath}
H
\leq
\max\msetb{17,\frac{\sqrt{d}}{10},
d\log\msetp{n}-0.88d+5.03}.
\end{displaymath}
Since $\msetv{\beta}=1$ and $\beta$ is not a root of unity, apply \cite
[Th\'eor\`eme~3]{laurent1995} to $\msetv{m}\pi i-n\log\msetp{\beta}$ and use
$\log\msetp{\beta}=\arg\msetp{\beta}i$ to obtain the desired lower bound.
\end{proof}

\begin{lemma}
\label{mc:lem:ordbetamp}
Let $q,a_1$ be integers with $q\geq 2$ and $\msetv{a_1}\leq 2\sqrt{q}$. If the
pair $q,a_1$ is ordinary and $q$ is not a square, then the minimal polynomial of
$\beta$ over $\mqq$ is
\begin{displaymath}
X^4+\msetp{2-\frac{a_1^2}{q}}X^2+1.
\end{displaymath}
\end{lemma}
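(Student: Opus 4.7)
The plan is to exhibit the claimed polynomial as a rational annihilator of $\beta$ and then to prove it irreducible over $\mqq$. From the proof of Lemma~\ref{mc:lem:bierou} we already know that $\beta$ is a root of $X^2 - \frac{a_1}{\sqrt{q}}X + 1$, and hence also of its product with its Galois conjugate under $\sqrt{q}\mapsto -\sqrt{q}$,
\[
f\msetp{X} = \msetp{X^2 - \frac{a_1}{\sqrt{q}}X + 1}\msetp{X^2 + \frac{a_1}{\sqrt{q}}X + 1} = X^4 + \msetp{2 - \frac{a_1^2}{q}}X^2 + 1,
\]
whose coefficients lie in $\mqq$ because $a_1,q \in \mzz$. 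It therefore remains to show that $f$ is irreducible over $\mqq$.

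The four complex roots of $f$ are $\pm\beta$ and $\pm\bar\beta$, where the bar denotes complex conjugation and $\bar\beta = \beta^{-1}$ since $\msetv{\beta} = 1$. Any rational root would be real of modulus $1$, so $\pm 1$; but then $\beta$ would be a root of unity, contradicting the assumption that the pair $q,a_1$ is ordinary. So $f$ has no rational linear factor, and I am reduced to ruling out a factorization into two rational quadratics, equivalently a partition of $\msetb{\pm\beta,\pm\bar\beta}$ into two pairs whose sums and products both lie in $\mqq$.

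There are only three essentially distinct such pairings, and each one will force $\beta$ to be a root of unity. The pairing $\msetb{\beta,\bar\beta}$ and $\msetb{-\beta,-\bar\beta}$ gives the quadratic factor $X^2 - \msetp{a_1/\sqrt{q}}X + 1$; rationality forces $a_1/\sqrt{q}\in\mqq$, and since $q$ is not a square this requires $a_1 = 0$, whence $\beta^2 = -1$. The pairing $\msetb{\beta,-\beta}$ and $\msetb{\bar\beta,-\bar\beta}$ forces $\beta^2 \in \mqq$; combined with $\msetv{\beta^2} = 1$ this gives $\beta^4 = 1$. The pairing $\msetb{\beta,-\bar\beta}$ and $\msetb{-\beta,\bar\beta}$ forces $\beta - \bar\beta \in \mqq$, and since $\beta - \bar\beta \in i\mrr$ this yields $\beta = \bar\beta$, so $\beta = \pm 1$. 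In each case $\beta$ would be a root of unity, contradicting ordinarity. The only genuinely delicate step is the first pairing, where the hypothesis that $q$ is not a square is used precisely to upgrade $a_1/\sqrt{q}\in\mqq$ to $a_1=0$; the other two pairings are geometric and need no arithmetic input beyond $\msetv{\beta}=1$.
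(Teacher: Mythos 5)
Your proof is correct, but it proves irreducibility by a genuinely different route than the paper. Both arguments begin identically: $\beta$ satisfies $X^2-\frac{a_1}{\sqrt{q}}X+1$, hence also the quartic obtained by multiplying with the conjugate factor under $\sqrt{q}\mapsto-\sqrt{q}$. For irreducibility, the paper argues via field degrees: since $\beta$ is not a root of unity, Lemma~\ref{mc:lem:bierou} gives $a_1\neq 0,\pm 2\sqrt{q}$, so $\sqrt{q}=\frac{a_1\beta}{\beta^2+1}\in\mqq\msetp{\beta}$ and $\beta\notin\mrr$; thus $\mqq\msetp{\beta}$ strictly contains the degree-$2$ field $\mqq\msetp{\sqrt{q}}$ (here ``$q$ not a square'' enters) and so has degree $4$ over $\mqq$, forcing the quartic to be irreducible. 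You instead rule out rational linear factors (a rational root of modulus $1$ is $\pm 1$) and exhaustively check the three partitions of $\msetb{\pm\beta,\pm\bar\beta}$ into pairs that could yield rational quadratic factors, showing each forces $\beta$ to be a root of unity; your analysis is complete and correctly isolates the first pairing as the only place where ``$q$ not a square'' is needed. Your argument is more elementary and self-contained (it uses only ``$\beta$ is not a root of unity'' directly, without invoking the classification in Lemma~\ref{mc:lem:bierou}), at the cost of a three-way case check; the paper's argument is shorter and, as a by-product, identifies $\mqq\msetp{\beta}$ as a degree-$4$ CM-type field containing $\mqq\msetp{\sqrt{q}}$, which is conceptually aligned with the rest of Section~\ref{mc:sec:ordc}.
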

\begin{proof}
Since $\beta$ is a root of $X^2-\frac{a_1}{\sqrt{q}}X+1$, it is also a root of
the polynomial above. Proposition~\ref{mc:prop:qscm} gives $a_1\neq 0,\pm 2
\sqrt{q}$, because $\beta$ is not a root of unity. Thus $\sqrt{q}\in\mqq\msetp
{\beta}$ and $\mrr\msetp{\beta}=\mcc$. Hence $\msets{\mqq\msetp{\beta}:\mqq}=4$,
that is the degree 4 polynomial is irreducible.
\end{proof}

\begin{proof}[Proof of Proposition~\ref{mc:prop:laurentbound}]
Define the function $f:\mrr_{\geq 1}\times\mrr_{\geq 1}
\rightarrow\mrr$ as
\begin{displaymath}
\msetp{q,n}\longmapsto
\frac{n}{4}\log\msetp{q}-8.87\msetp{10.98\pi+\frac{1}{2}\log\msetp{q}}\msetp{
2\log\msetp{n}+3.27}^2-\log\msetp{\frac{\pi}{3}}.
\end{displaymath}
Observe that $f\msetp{1,n}<0$ for all $n\geq 1$. The function $\mrr_{\geq 1}
\rightarrow \mrr$
\begin{displaymath}
n\longmapsto q\cdot \frac{\partial f}{\partial q}\msetp{q,n}
=\frac{n}{4}-\frac{8.87}{2}\msetp{2\log\msetp{n}+3.27}^2
\end{displaymath}
is independent of $q$, strictly convex, has a unique minimum $1243<n_1<1244$
and has a unique zero $8007<n_2<8008$ such that $n_2>n_1$. Therefore $\frac{
\partial f}{\partial q}\msetp{q,\msetf{n_2}}<0$ for all $q\geq 1$, and as a
result $f\msetp{q,\msetf{n_2}}<0$ for all $q\geq 1$. On the other hand $n
\mapsto f\msetp{q,n}$ is also strictly convex, because (for $q,n\geq 1$)
\begin{displaymath}
\frac{\partial^2 f}{\partial n^2}\msetp{q,n}=
35.48\msetp{10.98\pi+\frac{1}{2}\log\msetp{q}}\frac{2\log\msetp{n}+1.27}{n^2}
 >0.
\end{displaymath}
Moreover if $q>1$, then $f\msetp{q,n}>0$ for sufficiently large $n$. Combined
with $f\msetp{q,\msetf{n_2}}<0$ this shows that for all $q>1$ there exists a
unique $N_q>\msetf{n_2}$ such that $f\msetp{q,N_q}=0$.

Since $q\cdot\frac{\partial f}{\partial q}\msetp{q,n}=c_n>0$ for all $n>\msetf
{n_2}$, $f\msetp{q^\prime,N_q}>f\msetp{q,N_q}=0$ for all $q^\prime>q$, which
implies that $N_{q^\prime}<N_q$ for all $q^\prime>q$. Hence the first part of
the proposition follows.

Assume that the pair $q,a_1$ is ordinary and $-a_n=\msetf{2\sqrt{q}^n}$ for
some integer $n$. Lemma~\ref{mc:lem:logupperbound} gives
\begin{displaymath}
\msetv{m\pi+n\arg\msetp{\beta}}
<\frac{\pi}{3}\frac{1}{\sqrt[4]{q}^n}
\end{displaymath}
for some odd integer $m$ such that $\msetv{m}\leq n$. The integer $q$ is not a
square by Proposition~\ref{mc:prop:oqnsno} and the minimal polynomial of
$\beta$ over $\mzz$ has degree 4 and divides
\begin{displaymath}
qX^4+\msetp{2q-a_1^2}X^2+q
\end{displaymath}
by Lemma~\ref{mc:lem:ordbetamp}, so that $\msets{\mqq\msetp{\beta}:\mqq}=4$.
Since $\msetv{\beta}= 1$ and $\beta$ is not a root of unity, $\beta$, $\bar
{\beta}$, $-\beta$ and $-\bar{\beta}$ are the distinct roots of this polynomial
so that the logarithmic height of $\beta$ is at most $\frac{1}{4}\log\msetp{q}
$. Lemma~\ref{mc:lem:laurent} says
\begin{displaymath}
\log\msetv{m\pi+n\arg\msetp{\beta}}
\geq
-8.87\msetp{10.98\pi+\frac{1}{2}\log\msetp{q}}\max\msetb{17,
2\log\msetp{n}+3.27}^2.
\end{displaymath}
Let $n_0$ be such that $17=2\log\msetp{n_0}+3.27$, that is $n_0=e^{6.865}
\approx 958.1$. If $n\geq N_q$, then $n>\msetf{n_2}>n_0$ and so $f\msetp{q,n}
<0$ by the upper and lower bounds on $\msetv{m\pi+n\arg\msetp{\beta}}$, which
contradicts $f\msetp{q,n}\geq 0$ for all $n\geq N_q$. This proves the second
part of the proposition.
\end{proof}

\subsection{Computing maximal triples}
\label{mc:ssec:algorithm}

Given an ordinary pair $q,a_1$ the upper bound in Proposition~\ref
{mc:prop:laurentbound} reduces the problem of determining the $n>1$ such that
$-a_n=\msetf{2\sqrt{q}^n}$ to a finite computation. An efficient method to
compute such $n$ is described in \cite[Section~6.1]{doetjes2009}: $n$ is
essentially the denominator of a convergent of $\frac{\arg\msetp{\beta}}{\pi}$.
We extend \cite[Stelling~6.8]{doetjes2009} in order to take into account
numerical errors.

\begin{proposition}
\label{mc:prop:apprconv}
Let $q,a_1$ be integers with $q\geq 2$ and $a_1=2\sqrt{q}\cos\msetp{\theta}$
for some $\theta\in\msets{0,\pi}$ and $x\in\mrr$ such that for some positive
integer $N$
\begin{displaymath}
\msetv{x-\frac{\theta}{\pi}}
\leq
\frac{1}{2N^2}\cdot\left\{
\begin{array}{ll}
1-\frac{2}{3}\frac{13}{\sqrt[4]{2}^{13}}
& \text{if } q=2, \\
1-\frac{2}{3}\frac{3}{\sqrt[4]{q}^3}
& \text{if } q\geq 3.
\end{array}
\right.
\end{displaymath}
If $-a_n=\msetf{2\sqrt{q}^n}$ for some odd integer $3\leq n\leq N$ and either
$q\geq 3$ or $n\geq 13$, then $\frac{m}{n}$ is a convergent of $x$ for some odd
$m$.
\end{proposition}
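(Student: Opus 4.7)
The plan is to reduce the statement to Legendre's classical continued fraction criterion: if $\msetv{x-p/r}<\frac{1}{2r^2}$ with $\gcd\msetp{p,r}=1$ and $r>0$, then $p/r$ is a convergent of $x$.

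First I would invoke Lemma~\ref{mc:lem:logupperbound}, whose hypotheses are exactly those available here, to produce an odd integer $m_0$ with $\msetv{m_0}\le n$ satisfying
\begin{displaymath}
\msetv{m_0\pi+n\arg\msetp{\beta}}<\frac{\pi}{3\sqrt[4]{q}^n}.
\end{displaymath}
From $a_1=2\sqrt{q}\cos\msetp{\theta}$ with $\theta\in\msets{0,\pi}$ and the global convention $\arg\msetp{\alpha}\in\msets{0,\pi}$, injectivity of $\cos$ on $\msets{0,\pi}$ gives $\arg\msetp{\beta}=\theta$. Dividing by $n\pi$ and putting $m=-m_0$ (still odd, still $\msetv{m}\le n$) yields
\begin{displaymath}
\msetv{\frac{m}{n}-\frac{\theta}{\pi}}<\frac{1}{3n\sqrt[4]{q}^n}.
\end{displaymath}

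Next, combining with the hypothesis via the triangle inequality and using $n\le N$, I would obtain
\begin{displaymath}
\msetv{x-\frac{m}{n}}<\frac{C_q}{2N^2}+\frac{1}{3n\sqrt[4]{q}^n}\le\frac{C_q}{2n^2}+\frac{1}{3n\sqrt[4]{q}^n},
\end{displaymath}
where $C_q$ denotes the bracketed factor in the statement. The goal is to show this is strictly less than $\frac{1}{2n^2}$, which amounts to $\frac{1}{3n\sqrt[4]{q}^n}\le\frac{1-C_q}{2n^2}$. Unwinding $C_q$, this is the inequality $n\le 3\sqrt[4]{q}^{n-3}$ when $q\ge 3$ and $n\le 13\cdot\sqrt[4]{2}^{n-13}$ when $q=2$. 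Both hold with equality at the threshold ($n=3$ and $n=13$, respectively), and an elementary calculus argument shows that $n\mapsto n/\sqrt[4]{q}^n$ attains its maximum at $n=4/\log\msetp{q}$ and is strictly decreasing past that point. Since the thresholds exceed $4/\log\msetp{q}$ in each case (negligibly for $q=3$, comfortably for $q=2$), the inequalities persist for every odd $n$ in the allowed range.

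With $\msetv{x-m/n}<\frac{1}{2n^2}$ in hand, I would finish by applying Legendre's criterion to the reduction of $m/n$ to lowest terms: this reduction preserves the bound (only making the denominator smaller), and because $m$ and $n$ are both odd so is the reduced numerator, which therefore furnishes the required odd $m$ for which $m/n$ equals a convergent of $x$. The main technical obstacle is the boundary calibration: the somewhat unusual looking constants $\frac{2}{3}\cdot\frac{13}{\sqrt[4]{2}^{13}}$ and $\frac{2}{3}\cdot\frac{3}{\sqrt[4]{q}^3}$ are engineered precisely so that the key inequality is tight at $n=13$ and $n=3$ respectively, and the only nontrivial analytic verification is the monotonicity claim above.
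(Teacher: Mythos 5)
Your argument follows the paper's proof almost line for line: the same appeal to Lemma~\ref{mc:lem:logupperbound} to produce an odd $m$ with $\msetv{\frac{m}{n}-\frac{\theta}{\pi}}<\frac{1}{3n\sqrt[4]{q}^n}$, the same triangle-inequality reduction to Legendre's criterion $\msetv{x-\frac{m}{n}}<\frac{1}{2n^2}$, and the same key inequality $n/\sqrt[4]{q}^n\leq k/\sqrt[4]{q}^k$ with $k=3$ or $k=13$, which is precisely the paper's condition $f\msetp{n}\geq f\msetp{k}$.

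The one step whose justification is wrong is the case $q=3$. You assert that the thresholds exceed $4/\log\msetp{q}$ ``in each case (negligibly for $q=3$)'', but the inequality goes the other way: $4/\log\msetp{3}\approx 3.64>3$, so the map $n\mapsto n/\sqrt[4]{3}^n$ is still \emph{increasing} at $n=3$, and the ``strictly decreasing past the maximum'' argument does not give $n/\sqrt[4]{3}^n\leq 3/\sqrt[4]{3}^3$ for all $n\geq 3$ --- indeed it fails at $n=4$, since $4/3=1.333\ldots>3^{1/4}\approx 1.316$. The claim survives only because $n$ is restricted to odd values: the sole odd integer at or below the maximum point is $n=3$ itself, and one must verify separately that $5/\sqrt[4]{3}^5<3/\sqrt[4]{3}^3$ (true: $\approx 1.267$ versus $\approx 1.316$), after which monotonicity takes over for odd $n\geq 5$. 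This is exactly the extra check the paper makes (``If $q=3$, then $3<n_0<4$ and $f\msetp{3}<f\msetp{5}$''). With that one repair your proof is complete and identical in substance to the paper's; your closing remark on reducing $m/n$ to lowest terms and the parity of the reduced numerator correctly anticipates the caveat the paper states after the proposition.
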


Propositions~\ref{mc:prop:oqnsno} and~\ref{mc:prop:apprconv} together with $x=
\frac{\theta}{\pi}$ imply \cite[Stelling~6.8]{doetjes2009}.

\begin{proof}
Assume that $-a_n=\msetf{2\sqrt{q}^n}$ for some $n$. Since $\arg\msetp{\beta}=
\theta$ by the choice of $\beta$, Lemma~\ref{mc:lem:logupperbound} implies that
\begin{displaymath}
\msetv{\frac{m}{n}-\frac{\theta}{\pi}}
<\frac{1}{3}\frac{1}{n\sqrt[4]{q}^n}
\end{displaymath}
for some odd integer $m$ such that $\msetv{m}\leq n$. If $x\in\mrr$ such that
\begin{displaymath}
\msetv{x-\frac{\theta}{\pi}}
\leq
\frac{1}{2n^2}-\frac{1}{3}\frac{1}{n\sqrt[4]{q}^n},
\end{displaymath}
then $\msetv{x-\frac{m}{n}}<\frac{1}{2n^2}$ so that $\frac{m}{n}$ is a
convergent of $x$ by \cite[Theorem~184]{hardy1975}.

Define the function $f:\mrr\rightarrow\mrr$ as
\begin{displaymath}
f\msetp{n}
=1-\frac{2}{3}\frac{n}{\sqrt[4]{q}^n}.
\end{displaymath}
It has a global minimum at $n_0=\frac{4}{\log\msetp{q}}$. Observe that $f\msetp
{n_0}$ is positive except for $q=2$. Consider the following cases:
\begin{itemize}
\item If $q=2$, then $n=13$ is the first integer for which $f\msetp{n}$ is
positive.
\item If $q=3$, then $3<n_0<4$ and $f\msetp{3}<f\msetp{5}$.
\item If $q\geq 4$, then $n_0<3$.
\end{itemize}
Since $3\leq n\leq N$ is odd and either $q\geq 3$ or $n\geq 13$,
\begin{displaymath}
\frac{f\msetp{n}}{2n^2}
\geq \frac{f\msetp{n}}{2N^2}
\geq \frac{1}{2N^2}\left\{
\begin{array}{ll}
f\msetp{13} & \text{if } q=2, \\
f\msetp{3} & \text{if } q\geq 3. \\
\end{array}
\right.
\geq \msetv{x-\frac{\theta}{\pi}}.
\end{displaymath}
Hence $\frac{m}{n}$ is a convergent of $x$.
\end{proof}

Beware of applying the above proposition. If $-a_n=\msetf{2\sqrt{q}^n}$ for
some $n$, then $\frac{m}{n}$ is equal to a convergent of $\frac{\theta}{\pi}$
according to the proposition, but $m$ and $n$ need not be relative prime.
However let $d=\gcd\msetp{m,n}$ and $\tilde{n}=\frac{n}{d}$, then
\begin{displaymath}
\arg\msetp{-\beta^{\tilde{n}}}
=\frac{1}{d}\arg\msetp{-\beta^n}
\end{displaymath}
and
\begin{displaymath}
\frac{1}{2}\msetv{\beta^{\tilde{n}}+1}
=\msetv{\sin\msetp{\frac{1}{2}\arg\msetp{-\beta^{\tilde{n}}}}}
\leq\msetv{\sin\msetp{\frac{1}{2}\arg\msetp{-\beta^n}}}
=\frac{1}{2}\msetv{\beta^n+1},
\end{displaymath}
so that $-a_{\tilde{n}}=\msetf{2\sqrt{q}^{\tilde{n}}}$ by Lemma~\ref
{mc:lem:translation} for $n$ and $\tilde{n}$.

\begin{algorithm}
\caption{The procedure \textsc{MaximalCurves} takes as input integers $q,a_1$
with $q\geq 2$ and $\msetv{a_1}\leq 2\sqrt{q}$ such that the pair is ordinary
and outputs the $n$'s with $n>1$ such that $-a_n=\msetf{2\sqrt{q}^n}$. The
function \textsc{MaximalDegree}($q$) returns the upper bound on $n$ from
Proposition~\ref{mc:prop:laurentbound}, the function \textsc{Convergents}($x$,
$N$) computes the convergents of $x$ with denominator at most $N$ and the
function \textsc{IsSolution}($q$,$a_1$,$n$) checks $-a_n=\msetf{2\sqrt{q}^n}$.}
\label{mc:alg:maxcurves}
\begin{algorithmic}[1]
\Procedure{MaximalCurves}{$q$, $a_1$}
\If{$q$ not square}
	\State $\theta\gets$ \Call{ArcCos}{$\frac{a_1}{2\sqrt{q}}$}
	\State $N\gets$ \Call{MaximalDegree}{$q$}
	\State $C\gets$ \Call{Convergents}{$\frac{\theta}{\pi}$, $N$}
	\If{$q=2$}
		\ForAll{$n\in\msetb{3,5,7,9,11}$}
			\If{\Call{IsSolution}{$q$, $a_1$, $n$}}
				\State \textbf{print} $n$
			\EndIf
		\EndFor
	\EndIf
	\ForAll{$\frac{m}{n}\in C:m\text{ odd}, n\text{ odd}$}
		\State \Call{ConvergentsToSolutions}{$q$, $a_1$, $N$, $n$}
	\EndFor
\EndIf
\EndProcedure
\Statex
\Procedure{ConvergentsToSolutions}{$q$, $a_1$, $N$, $n$}
\If{\Call{IsSolution}{$q$, $a_1$, $n$}}
	\If{$n>1$}
		\State \textbf{print} $n$
	\EndIf
	\ForAll{$p\in\msetb{3,\ldots,\msetf{\frac{N}{n}}} : p\text{ prime}$}
		\State \Call{ConvergentsToSolutions}{$q$, $a_1$, $N$, $pn$}
	\EndFor
\EndIf
\EndProcedure
\end{algorithmic}
\end{algorithm}

We implemented Algorithm~\ref{mc:alg:maxcurves} in PARI/GP~\cite{pari-2.9.3}
for pairs $q,a_1$ corresponding to isogeny classes of ordinary elliptic curves,
that is $q$ is a prime power, $\msetv{a_1}\leq 2\sqrt{q}$ and $\gcd\msetp{q,
a_1}=1$. The upper bounds on the degree $n$ in Table~\ref{mc:tbl:qfnq} combined
with Proposition~\ref{mc:prop:apprconv} show that approximating $\frac{\theta}
{\pi}$ up to an error of at most $10^{-15}$ is sufficient to compute the
relevant convergents of $\frac{\theta}{\pi}$. The execution time of the
function \textsc{IsSolution} is reduced by verifying the necessary condition in
Lemma~\ref{mc:lem:logupperbound} before computing $a_n$.

Using our program we computed the triples $\msetp{q,a_1,n}$ with $q<10^6$ a
prime power, $\msetv{a_1}\leq 2\sqrt{q}$, $\gcd\msetp{q,a_1}=1$ and $n>1$ such
that $-a_n=\msetf{2\sqrt{q}^n}$. All triples have $n=3$ or $n=5$, except for
$\msetp{2,1,13}$ and $\msetp{5,1,7}$. The triples with $n=3$ and $q<10^3$ are
listed in Table~\ref{mc:tbl:opn3} and the triples with $n=5$ and $q<10^6$ are
listed in Table~\ref{mc:tbl:opn5}. Based on these results we expect that the
cases $n=3$ and $n=5$ occur infinitely often, whereas the cases $n\geq 7$
happen at most finitely many times.

\begin{table}[t]
\begin{center}
\caption{The list of all pairs $q,a_1$ with $q<10^3$ a prime power, $\msetv{a_1}
\leq 2\sqrt{q}$ and $\gcd\msetp{q,a_1}=1$ such that $-a_3=\msetf{2\sqrt{q}^3}$.}
\begin{tabular}{cc|cc|cc|cc|cc|cc}
$q$ & $a_1$ & $q$ & $a_1$ & $q$ & $a_1$ & $q$ & $a_1$ & $q$ & $a_1$ & $q$ & $a_1$ \\
\hline
 2 & 1 &  37 &  6 & 103 & 10 & 229 & 15 & 479 & 22 & 787 & 28 \\
 3 & 2 &  47 &  7 & 167 & 13 & 257 & 16 & 487 & 22 & 839 & 29 \\
 5 & 2 &  61 &  8 & 173 & 13 & 293 & 17 & 571 & 24 & 967 & 31 \\
 8 & 3 &  67 &  8 & 193 & 14 & 359 & 19 & 577 & 24 & & \\
11 & 3 &  79 &  9 & 197 & 14 & 397 & 20 & 673 & 26 & & \\
17 & 4 &  83 &  9 & 199 & 14 & 401 & 20 & 677 & 26 & & \\
23 & 5 &  97 & 10 & 223 & 15 & 439 & 21 & 727 & 27 & & \\
27 & 5 & 101 & 10 & 227 & 15 & 443 & 21 & 733 & 27 & &
\end{tabular}
\label{mc:tbl:opn3}
\end{center}
\end{table}

\begin{table}[t]
\begin{center}
\caption{The list of all pairs $q,a_1$ with $q<10^6$ a prime power, $\msetv
{a_1}\leq 2\sqrt{q}$ and $\gcd\msetp{q,a_1}=1$ such that $-a_5=\msetf{2\sqrt{q}
^5}$.}
\begin{tabular}{cc|cc|cc}
$q$ & $a_1$ & $q$ & $a_1$ & $q$ & $a_1$ \\
\hline
 2 & -1 &   128 &  -7 &  10399 &  165 \\
 3 & -1 &   317 & -11 &  22159 &  -92 \\
11 & -2 &  2851 & -33 & 122147 & -216 \\
23 & -3 &  8807 & -58 & 192271 & -271 \\
31 &  9 & 10391 & -63 & 842321 & 1485
\end{tabular}
\label{mc:tbl:opn5}
\end{center}
\end{table}

\subsection{Upper bound on the cardinality}
\label{mc:ssec:ubcard}

In this subsection we determine an upper bound on $q$ and conclude:

\begin{theorem}
There exist only finitely many ordinary pairs $q,a_1$ such that $-a_n=\msetf
{2\sqrt{q}^n}$ for some $n\geq 13$.
\end{theorem}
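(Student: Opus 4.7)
The plan is to combine the Laurent-type bound on $n$ from Proposition~\ref{mc:prop:laurentbound} with Roth's theorem (a classical consequence of Schmidt's Subspace Theorem) applied separately for each admissible pair $(n,m)$. By Proposition~\ref{mc:prop:laurentbound} the sequence $\{N_q\}_{q\geq 2}$ decreases monotonically, so any $n$ satisfying the hypothesis obeys $n < N_q \leq N_2$. Together with Proposition~\ref{mc:prop:oqnsno} this confines $n$ to a finite set of odd integers, so it suffices, for each fixed $n \geq 13$ and each odd $m$ with $|m| \leq n$, to show that only finitely many ordinary pairs $(q, a_1)$ can arise.

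By Lemma~\ref{mc:lem:logupperbound}, such a pair supplies an odd integer $m$ with $|m|\leq n$ and $|a_1/(2\sqrt{q}) - \cos(m\pi/n)| < \pi/(3nq^{n/4})$. Setting $\gamma := 2\cos(m\pi/n)$ and multiplying by $|a_1 + \gamma\sqrt{q}|\leq 4\sqrt{q}$ yields
\begin{displaymath}
|a_1^2 - \gamma^2\, q| < \frac{8\pi}{3n}\,q^{1-n/4}.
\end{displaymath}
If $\gamma^2 \in \mqq$, then Niven's theorem combined with $n$ odd forces $\gamma^2 \in \{1,4\}$; since the right-hand side is less than $1$ for every $q \geq 2$ once $n \geq 13$, the integer $a_1^2 - \gamma^2 q$ must vanish, making $q$ a perfect square and contradicting Proposition~\ref{mc:prop:oqnsno}. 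If $\gamma^2$ is irrational algebraic, Roth's theorem supplies, for each $\epsilon > 0$, a positive constant $C = C(n,m,\epsilon)$ with $|p/r - \gamma^2| > C/r^{2+\epsilon}$ for all rationals $p/r$ in lowest terms. Writing $a_1^2/q = p/r$ in lowest terms (so $r \leq q$) and comparing with the displayed inequality yields $C \cdot q^{-2-\epsilon} \leq C \cdot r^{-2-\epsilon} < (8\pi/(3n)) \cdot q^{-n/4}$, so that $q^{n/4 - 2 - \epsilon}$ is bounded above by a constant. For $n \geq 13$ and $\epsilon < 1/4$ this exponent is positive, forcing $q$ into a finite set; each such $q$ admits only $O(\sqrt{q})$ values of $a_1$, and the claim follows upon taking the union over the finitely many pairs $(n,m)$.

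The chief obstacle is the ineffective dependence of Roth's constant $C$ on the algebraic target $\gamma^2 = 4\cos^2(m\pi/n)$, and hence on $(n,m)$; this precludes a direct uniform-in-$n$ argument. It is overcome by first invoking Proposition~\ref{mc:prop:laurentbound} to confine $(n,m)$ to a finite set, after which each individual Roth-type application contributes only finitely many pairs and the total union remains finite.
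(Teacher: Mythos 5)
Your proof is correct, but it departs from the paper's at the crucial Diophantine step, in a way worth recording. The paper bounds $\msetv{\beta^n+1}$ from below by isolating the nearest $2n$-th root of $-1$ in the factorization $\beta^n+1=\prod_i\msetp{\beta-\zeta_{2n}^{2i+1}}$, reducing the problem to a lower bound on $\msetv{\frac{a_1}{2\sqrt{q}}-\cos\msetp{\frac{m\pi}{n}}}$; it then applies \cite[Theorem~2.7]{bugeaud2004} --- approximation of a fixed algebraic number by algebraic numbers of degree at most two, with exponent $3+\varepsilon$ in the height $h\leq 4q$ --- and therefore needs $\frac{n}{4}>3+\varepsilon$, which is precisely why its threshold is $n\geq 13$ (the paper's closing remark on \cite[Theorem~2.8]{bugeaud2004} shows that route cannot go lower). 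You reach the same approximation inequality via Lemma~\ref{mc:lem:logupperbound} and $\msetv{\cos x-\cos y}\leq\msetv{x-y}$, which is essentially equivalent to the paper's factorization step, but you then square: $\msetv{\frac{a_1^2}{q}-4\cos^2\msetp{\frac{m\pi}{n}}}$ is a \emph{rational} approximation to a fixed algebraic target with denominator dividing $q$, so classical Roth with exponent $2+\varepsilon$ suffices, and the degenerate case $4\cos^2\msetp{\frac{m\pi}{n}}\in\mqq$ is eliminated by Niven's theorem together with Proposition~\ref{mc:prop:oqnsno}. One small point there deserves to be spelled out: Niven only gives $\gamma^2\in\msetb{0,1,2,3,4}$, and the values $0,2,3$ must be excluded by the parity of $m$ and $n$ (as you assert), since $a_1^2=2q$ or $a_1^2=3q$ would not force $q$ to be a square; with that checked, your case analysis closes. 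Both arguments are ineffective (Roth's constant, respectively Bugeaud's) and both lean on Proposition~\ref{mc:prop:laurentbound} to confine $n$ to a finite range. What your route buys is a strictly better exponent, $\frac{n}{4}>2+\varepsilon$ instead of $\frac{n}{4}>3+\varepsilon$, so your argument in fact proves finiteness for all $n\geq 9$, a strengthening of the stated theorem; what the paper's route buys is a single uniform application of one approximation theorem with no rationality case split.
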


Since Proposition~\ref{mc:prop:laurentbound} also gives an upper bound on the
degree $n$ independent of $q$, the theorem is an immediate consequence of:

\begin{proposition}
Let $n\geq 13$ be an integer. There exists a constant $q_n$ such that if $-a_n=
\msetf{2\sqrt{q}^n}$ for some integers $q,a_1$ with $q\geq 2$ and $\msetv{a_1
}\leq 2\sqrt{q}$, then $q\leq q_n$ or the pair $q,a_1$ is supersingular.
\end{proposition}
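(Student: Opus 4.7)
My plan is to translate the maximality condition into a statement about rational approximation of a fixed algebraic irrational, and then to invoke Roth's theorem (a consequence of Schmidt's subspace theorem) to bound $q$ from above.

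By Lemma~\ref{mc:lem:translation} the hypothesis $-a_n = \msetf{2\sqrt{q}^n}$ is equivalent to $\msetv{\beta^n + 1} < q^{-n/4}$. Since the pair is ordinary, $\beta$ is not a root of unity, so $\theta := \arg\msetp{\beta}$ lies strictly in $\msetp{0,\pi}$. Writing $\beta = e^{i\theta}$ one finds $\msetv{\beta^n + 1} = 2\msetv{\cos\msetp{n\theta/2}}$, whose zeros on $\msets{0,\pi}$ are exactly $\theta_k := (2k+1)\pi/n$ for $k \in \msetb{0, 1, \ldots, (n-1)/2}$. Elementary trigonometric estimates then yield $\msetv{\theta - \theta_k} = O\msetp{q^{-n/4}/n}$ for some such $k$. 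Using $\cos\msetp{\theta} = a_1/(2\sqrt{q})$, squaring, and applying $2\cos^2\msetp{\theta_k} = 1 + \cos\msetp{2\theta_k}$ gives
\begin{displaymath}
\msetv{\frac{a_1^2 - 2q}{q} - \gamma_k} = O\msetp{q^{-n/4}},
\qquad
\gamma_k := 2\cos\msetp{(4k+2)\pi/n}.
\end{displaymath}
The ordinary hypothesis $\gcd\msetp{a_1, q} = 1$ forces $(a_1^2 - 2q)/q$ to have denominator exactly $q$ in lowest terms.

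The number $\gamma_k = \omega^{2k+1} + \omega^{-(2k+1)}$ with $\omega = e^{2\pi i/n}$ lies in the maximal totally real subfield of $\mqq\msetp{\zeta_n}$. For $n$ odd a direct check shows $\gamma_k \in \mqq$ only at $k = (n-1)/2$ (where $\gamma_k = 2$) and, if $3 \mid n$, at $k = (n/3 - 1)/2$ (where $\gamma_k = -1$). Outside these cases $\gamma_k$ is a real algebraic irrational, and Roth's theorem yields
\begin{displaymath}
\msetv{\frac{a_1^2 - 2q}{q} - \gamma_k}
\geq \frac{C_{\gamma_k, \epsilon}}{q^{2+\epsilon}}
\end{displaymath}
for every $\epsilon > 0$ and some $C_{\gamma_k, \epsilon} > 0$. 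Combining with the upper bound gives $q^{n/4 - 2 - \epsilon} = O\msetp{1}$; since $n \geq 13$ ensures $n/4 - 2 \geq 5/4$, choosing $\epsilon$ small bounds $q$.

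In each exceptional case $\gamma_k \in \mqq$, clearing denominators in $a_1^2 - 2q - q\gamma_k$ produces a $\mzz$-linear combination of $a_1^2$ and $q$ --- namely $a_1^2 - 4q$ if $\gamma_k = 2$, and $a_1^2 - q$ if $\gamma_k = -1$ --- of absolute value $O\msetp{q^{1 - n/4}}$. The ordinary hypothesis (in particular $q$ is not a square together with $\gcd\msetp{a_1, q} = 1$) rules out the vanishing of these integers, so each is bounded below by $1$, again forcing $q$ to be bounded. Taking the maximum of the resulting bounds over the finitely many $k$ gives the desired $q_n$. The main obstacle is that the Roth constant $C_{\gamma_k, \epsilon}$ is ineffective, and hence $q_n$ is ineffective; a secondary subtlety is enumerating and dispatching the finitely many rational-$\gamma_k$ cases, which depend on the divisibility of $n$ by $3$.
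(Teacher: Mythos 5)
Your argument is correct, but it takes a genuinely different route from the paper's. The paper stays on the unit circle: it factors $\beta^n+1=\prod_{i}\msetp{\beta-\zeta_{2n}^{2i+1}}$, isolates the nearest root of unity, bounds $\msetv{\beta^n+1}\geq c_n\msetv{\frac{a_1}{2\sqrt{q}}-\cos\msetp{\frac{m\pi}{n}}}$, and then treats $\frac{a_1}{2\sqrt{q}}$ as a \emph{quadratic} algebraic number of height at most $4q$, invoking Schmidt's theorem \cite[Theorem~2.7]{bugeaud2004} (a consequence of the Subspace Theorem) to get a lower bound $c_0/h^{3+\varepsilon}$; this is why the paper needs $\frac{n}{4}>3$, i.e.\ $n\geq 13$. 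You instead square so as to land in $\mqq$: the quantity $\frac{a_1^2-2q}{q}$ is an honest rational with denominator at most $q$ approximating $2\cos\msetp{\frac{(4k+2)\pi}{n}}$, and plain Roth gives exponent $2+\varepsilon$, so your method only needs $\frac{n}{4}>2$ and would in fact prove the proposition for every odd $n\geq 9$ --- sharper than the paper's version, and not in conflict with the paper's closing remark about $n=7,9,11$, since that remark concerns approximation by \emph{general} algebraic numbers of degree at most~2 rather than by the special rationals $\frac{a_1^2-2q}{q}$. Both proofs are equally ineffective, and both must dispatch the finitely many indices where the target is rational; your enumeration of those indices ($\gamma_k=2$ always, $\gamma_k=-1$ exactly when $3\mid n$) is complete for odd $n$, and oddness of $n$ is available from Proposition~\ref{mc:prop:oqnsno}.

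Two small caveats, neither fatal. First, for a \emph{pair} $q,a_1$ the paper defines ``ordinary'' to mean that $\beta$ is not a root of unity, not that $\gcd\msetp{a_1,q}=1$ (that equivalence is only stated for elliptic curves), so you may not assume the denominator of $\frac{a_1^2-2q}{q}$ is exactly $q$ in lowest terms; this costs nothing, because Roth's inequality only improves when the fraction reduces. Second, the nonvanishing of $a_1^2-4q$ and $a_1^2-q$ in the exceptional cases is most cleanly derived from $a_1\neq\pm 2\sqrt{q},\pm\sqrt{q}$ (i.e.\ from $\beta$ not being a root of unity, via Lemma~\ref{mc:lem:bierou}) rather than from coprimality.
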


The \emph{height} of an algebraic number $\beta$ is defined as the maximum of
the absolute value of the coefficients of the minimal polynomial of $\beta$
over $\mzz$.

\begin{proof}
Assume that $q,a_1$ are integers with $q\geq 2$ and $\msetv{a_1}\leq 2\sqrt
{q}$ such that $-a_n=\msetf{2\sqrt{q^n}}$ for some $n$, then $\msetv{\beta^n+1}<
\frac{1}{\sqrt[4]{q}^n}$ by Lemma~\ref{mc:lem:translation}. Moreover assume that
the pair $q,a_1$ is ordinary, that is $\beta$ is not a root of unity.

Observe that $\beta^n+1=\prod_{i=1}^n\msetp{\beta-\zeta_{2n}^{2i+1}}$. Let $i_0$
be an integer such that
\begin{displaymath}
\msetv{\beta-\zeta_{2n}^{2i_0+1}}=\min_i\msetv{\beta-\zeta_{2n}^{2i+1}},
\end{displaymath}
which determines $i_0$ uniquely modulo $n$ because $\beta$ is not a root of
unity. Since
\begin{displaymath}
\msetv{\beta-\zeta_{2n}^{2i+1}}
\geq\min\msetb{\msetv{\zeta_{2n}^{2i_0}-\zeta_{2n}^{2i+1}},
\msetv{\zeta_{2n}^{2i_0+2}-\zeta_{2n}^{2i+1}}}>0,
\end{displaymath}
for all $i\not\equiv i_0\!\!\mod n$, there exists a positive constant $c_n$ such
that
\begin{displaymath}
\msetv{\beta^n+1}
\geq c_n\msetv{\beta-\zeta_{2n}^m}
\geq c_n\msetv{\frac{a_1}{2\sqrt{q}}-\cos\msetp{\frac{m\pi}{n}}}
\end{displaymath}
with $m=2i_0+1$. Let $\varepsilon>0$. According to \cite[Theorem~2.7]
{bugeaud2004} there exists an ineffective constant $c_0^\prime$ depending on
$\cos\msetp{\frac{m\pi}{n}}$ and $\varepsilon$ such that
\begin{displaymath}
\msetv{\frac{a_1}{2\sqrt{q}}-\cos\msetp{\frac{m\pi}{n}}}
\geq \frac{c_0^\prime}{h^{3+\varepsilon}}
\end{displaymath}
with $h$ the height of $\frac{a_1}{2\sqrt{q}}$. Since there are $n$ possible
values of $m$, the above inequality is also true for some constant $c_0$
depending only on $n$ and $\varepsilon$. The height of $\frac{a_1}{2\sqrt{q}}$
is at most $4q$. Therefore
\begin{displaymath}
\msetv{\beta^n+1}
\geq \frac{c_0c_n}{\msetp{4q}^{3+\varepsilon}}=\frac{c}{q^{3+\varepsilon}}
\end{displaymath}
for some positive constant $c$ depending only on $n$ and $\varepsilon$.

Choose $\varepsilon=\frac{1}{8}$ and $n\geq 13$. The upper and lower bounds on
$\msetv{\beta^n+1}$ imply $c<q^{3+\varepsilon-\frac{n}{4}}$. The right-hand side
converges to zero for $q\rightarrow\infty$, but $c>0$. Hence $q\leq q_n$ for
some constant $q_n$ independent of $\beta$.
\end{proof}

In some sense this proposition is the best possible in terms of $n$, because for
$n=7,9,11$ and $m$ relative prime to $n$ we deduce from \cite[Theorem~2.8]
{bugeaud2004} that there exists a constant $\tilde{c}$ and infinitely many
algebraic numbers $\gamma$ of degree 1 or 2 such that $\msetv{\gamma-\cos\msetp
{\frac{m\pi}{n}}}<\frac{\tilde{c}}{h_\gamma^{3-\varepsilon}}$ where $h_\gamma$
is the height of $\gamma$. If $h_\gamma\sim q$, then this upper bound
is eventually smaller than $\frac{1}{\sqrt[4]{q}^n}$.

\section{Maximal over cubic extensions}
\label{mc:sec:max3}

In this section we prove Theorem~\ref{mc:thm:max3prime}. For the sake of
completeness we also discuss some properties of the case $n=3$. The discussion
is closely related to \cite[Section~2.7]{soomro2013}.

Given a supersingular pair $q,a_1$ such that $\msetv{a_1}\leq 2\sqrt{q}$ and
$-a_3=\msetf{2\sqrt{q}^3}$, then $a_1=-2\sqrt{q}$ or $a_1=\sqrt{q}$ by
Proposition~\ref{mc:prop:qscm}. In this case $q$ must be a square. Since $q$ is
a prime in Theorem~\ref{mc:thm:max3prime}, we only consider ordinary pairs.

Recall the recurrence relation $a_{n+1}=a_1a_n-qa_{n-1}$ with $a_0=2$ mentioned
in the introduction. From this we deduce $a_3=a_1^3-3qa_1$. Therefore
\begin{displaymath}
-a_3=\msetf{2\sqrt{q}^3}
\quad\Longleftrightarrow\quad
0\leq a_1^3-3qa_1+2\sqrt{q}^3<1.
\end{displaymath}
Define the function $f_q:\msets{-2\sqrt{q},2\sqrt{q}}\rightarrow\mrr$ as $x
\mapsto x^3-3qx+2\sqrt{q}^3$. The graph of $f_q$ is shown in Figure~\ref
{mc:fig:max-deg-3}.

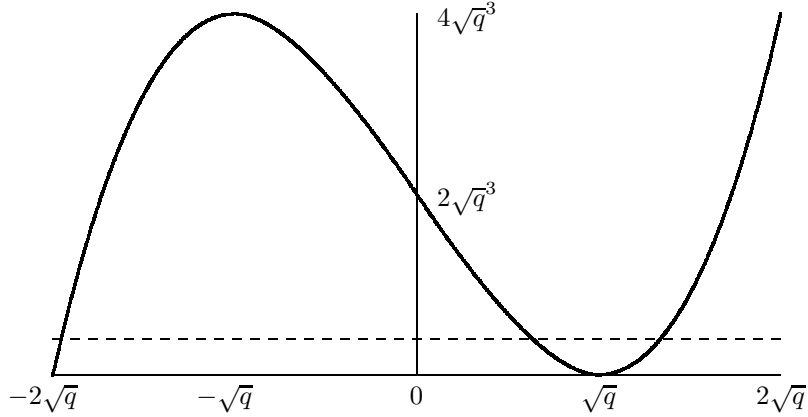
\begin{figure}
\begin{center}
\newlength{\mlw}
\settowidth{\mlw}{$-4\sqrt{q}^3-$}
\newlength{\mlh}
\settoheight{\mlh}{$-4\sqrt{q}^3-$}
\newlength{\mld}
\settoheight{\mld}{$-4\sqrt{q}^3-$}
\newlength{\fw}
\setlength{\fw}{0.9\textwidth}
\newlength{\ifw}
\setlength{\ifw}{\fw}
\addtolength{\ifw}{-\mlw}
\newlength{\ifh}
\setlength{\ifh}{0.5\ifw}
\newlength{\ibm}
\setlength{\ibm}{1.2\mlh}
\addtolength{\ibm}{\mld}
\newlength{\fh}
\setlength{\fh}{\ifh}
\addtolength{\fh}{\ibm}
\addtolength{\fh}{0.5\mlh}
\setlength{\unitlength}{0.5\ifw}
\begin{picture}(\fw,\fh)(-0.5\fw,-\ibm)
\thinlines
\put(-1,0){\line(1,0){2}}
\put(0,0){\line(0,1){1}}
\thicklines
\qbezier(-1,0)(-0.778,1)(-0.5,1)
\qbezier(-0.5,1)(-0.333,1)(0,0.5)
\qbezier(0,0.5)(0.333,0)(0.5,0)
\qbezier(0.5,0)(0.778,0)(1,1)
\thinlines
\multiput(-1,0.1)(0.0494,0){41}{\line(1,0){0.0247}}
\newlength{\lx}
\newlength{\ly}
\newlength{\lw}
\newlength{\lh}
\setlength{\ly}{-1.2\mlh}
\settowidth{\lw}{$-2\sqrt{q}-$}
\setlength{\lx}{-\unitlength}
\addtolength{\lx}{-0.5\lw}
\put(\lx,\ly){$-2\sqrt{q}$}
\settowidth{\lw}{$-\sqrt{q}-$}
\setlength{\lx}{-0.5\unitlength}
\addtolength{\lx}{-0.5\lw}
\put(\lx,\ly){$-\sqrt{q}$}
\settowidth{\lw}{$0$}
\setlength{\lx}{-0.5\lw}
\put(\lx,\ly){$0$}
\settowidth{\lw}{$\sqrt{q}$}
\setlength{\lx}{0.5\unitlength}
\addtolength{\lx}{-0.5\lw}
\put(\lx,\ly){$\sqrt{q}$}
\settowidth{\lw}{$2\sqrt{q}$}
\setlength{\lx}{\unitlength}
\addtolength{\lx}{-0.5\lw}
\put(\lx,\ly){$2\sqrt{q}$}
\setlength{\lx}{0.2\mlw}
\setlength{\ly}{0.5\unitlength}
\settoheight{\lh}{$2\sqrt{q}^3$}
\addtolength{\ly}{-0.5\lh}
\put(\lx,\ly){$2\sqrt{q}^3$}
\setlength{\ly}{\unitlength}
\settoheight{\lh}{$2\sqrt{q}^3$}
\addtolength{\ly}{-0.5\lh}
\put(\lx,\ly){$4\sqrt{q}^3$}
\end{picture}
\end{center}
\caption{The graph of $f_q\msetp{a}=a^3-3qa+2\sqrt{q}^3$.}
\label{mc:fig:max-deg-3}
\end{figure}

\begin{proposition}
Let $q,a_1$ be integers such that $q\geq 3$ and $\msetv{a_1}\leq 2\sqrt{q}$. If
$-a_3=\msetf{2\sqrt{q}^3}$, then $a_1=-\msetf{2\sqrt{q}}$ or $a_1=\msets{\sqrt
{q}}$.
\end{proposition}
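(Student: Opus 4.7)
The plan is to recast the claim as a question about integer solutions to $0 \le f_q(a_1) < 1$ and to exploit the factorisation
\[
f_q(a) = a^3 - 3qa + 2\sqrt{q}^3 = (a - \sqrt{q})^2(a + 2\sqrt{q}),
\]
which is immediate from $f_q(\sqrt{q}) = f_q'(\sqrt{q}) = 0$. In particular $f_q \ge 0$ on $[-2\sqrt{q}, 2\sqrt{q}]$, and from $f_q'(a) = 3(a-\sqrt{q})(a+\sqrt{q})$ the function $f_q$ is increasing on $[-2\sqrt{q}, -\sqrt{q}]$, decreasing on $[-\sqrt{q}, \sqrt{q}]$, and increasing again on $[\sqrt{q}, 2\sqrt{q}]$, vanishing only at the simple root $-2\sqrt{q}$ and the double root $\sqrt{q}$. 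Consequently the set $\{a : f_q(a) < 1\}$ consists of two short intervals, one attached to $-2\sqrt{q}$ and one straddling $\sqrt{q}$, and the whole argument reduces to locating the integer points in each of them.

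For the component around $\sqrt{q}$, I would compute
\[
f_q\bigl(\sqrt{q} \pm \tfrac{1}{2}\bigr) = \tfrac{1}{4}\bigl(3\sqrt{q} \pm \tfrac{1}{2}\bigr)
\]
and use $q \ge 3$ to conclude that both values exceed $1$. Monotonicity then confines this component to the open interval $(\sqrt{q} - \tfrac{1}{2},\, \sqrt{q} + \tfrac{1}{2})$; being of length $1$ it contains at most one integer, and since $\sqrt{q}$ cannot be a half-integer (as $4q$ is even), that integer, if it exists, is automatically the nearest integer $\msets{\sqrt{q}}$ to $\sqrt{q}$. For the component around $-2\sqrt{q}$, write $s = 2\sqrt{q} - \msetf{2\sqrt{q}} \in [0, 1)$ and substitute $a = -\msetf{2\sqrt{q}} + 1$ to obtain
\[
f_q\bigl(-\msetf{2\sqrt{q}} + 1\bigr) = (3\sqrt{q} - 1 - s)^2 (1 + s) \ge (3\sqrt{3} - 2)^2 > 1
\]
for $q \ge 3$. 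Combined with monotonicity on $[-2\sqrt{q}, -\sqrt{q}]$ this forces the only integer in this component to be $-\msetf{2\sqrt{q}}$ itself; the remaining integers on $[-\sqrt{q}, \sqrt{q} - \tfrac{1}{2}]$ are then killed by $f_q(\sqrt{q} - \tfrac{1}{2}) > 1$ and the decreasing monotonicity.

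The main obstacle is arithmetic rather than conceptual: the three key inequalities $f_q(\sqrt{q} \pm \tfrac{1}{2}) > 1$ and $f_q(-\msetf{2\sqrt{q}} + 1) > 1$ must be verified uniformly down to $q = 3$ rather than merely asymptotically. The tightest of these is $f_q(\sqrt{q} - \tfrac{1}{2}) = (3\sqrt{q} - \tfrac{1}{2})/4 > 1$, which fails at $q = 2$ (where the value is about $0.94$); this is precisely why the hypothesis $q \ge 3$ appears. Once $q \ge 3$ is assumed, however, all three bounds admit explicit closed-form lower estimates such as $(3\sqrt{3} - \tfrac{1}{2})/4$ and $(3\sqrt{3} - 2)^2$, and the classification goes through without difficulty.
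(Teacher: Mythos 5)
Your proof is correct and follows essentially the same route as the paper's: locate the two sublevel components of $f_q$ below $1$ via the critical points at $\pm\sqrt{q}$ and rule out all integers except $-\msetf{2\sqrt{q}}$ and $\msets{\sqrt{q}}$ by evaluating at $\sqrt{q}\pm\tfrac{1}{2}$ and just to the right of $-2\sqrt{q}$. The only differences are cosmetic: you make the factorisation $f_q(a)=(a-\sqrt{q})^2(a+2\sqrt{q})$ explicit and evaluate at the integer $-\msetf{2\sqrt{q}}+1$ rather than at $-2\sqrt{q}+1$, which slightly changes the constant but not the argument.
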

\begin{proof}
Notice that $f_q$ is maximal at $x=-\sqrt{q},2\sqrt{q}$ and that $f_q$ is
minimal at $x=-2\sqrt{q},\sqrt{q}$ and
\begin{displaymath}
f_q\msetp{-2\sqrt{q}+1}=\msetp{3\sqrt{q}-1}^2>1
\end{displaymath}
and
\begin{displaymath}
f_q\msetp{\sqrt{q}\pm \frac{1}{2}}=\frac{3}{4}\sqrt{q}\pm\frac{1}{8}>1.
\end{displaymath}
Hence $-2\sqrt{q}\leq a_1<-2\sqrt{q}+1$ or $\sqrt{q}-\frac{1}{2}<a_1<\sqrt{q}+
\frac{1}{2}$, that is $a_1=-\msetf{2\sqrt{q}}$ or $a_1=\msets{\sqrt{q}}$.
\end{proof}

According to the following proposition the case $a_1=-\msetf{2\sqrt{q}}$ is
possible only if the pair $q,a_1$ is supersingular.

\begin{proposition}
Let $q$ be an integer with $q\geq 2$ and $a_1=-\msetf{2\sqrt{q}}$. If $-a_3=\msetf{2
\sqrt{q}^3}$, then $q$ is a square.
\end{proposition}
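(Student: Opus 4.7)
The plan is to substitute $a_1 = -s$ (where $s = \lfloor 2\sqrt{q}\rfloor$) into the condition $0 \leq f_q(a_1) < 1$ from the previous proposition, simplify using the slack $\epsilon = 2\sqrt{q} - s \in [0,1)$, and show that for $q \geq 2$ not a square, the resulting expression is bounded below by $1$.

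First I would set $s = \lfloor 2\sqrt{q}\rfloor$ and $\epsilon = 2\sqrt{q} - s$, and compute
\begin{displaymath}
f_q(-s) = -s^3 + 3qs + 2\sqrt{q}^3.
\end{displaymath}
Substituting $s = 2\sqrt{q} - \epsilon$ and expanding, the goal is to obtain the clean factorization
\begin{displaymath}
f_q(-s) = \epsilon\,(3\sqrt{q}-\epsilon)^2.
\end{displaymath}
This identity is the crucial algebraic step; once it is in hand, the condition $-a_3 = \lfloor 2\sqrt{q}^3\rfloor$ becomes $\epsilon(3\sqrt{q}-\epsilon)^2 < 1$.

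Next I would get a lower bound on $\epsilon$ under the assumption that $q$ is not a square. Since $s$ is an integer with $s \leq 2\sqrt{q}$ and $s^2 \neq 4q$ (as $q$ is not a square), we have $s^2 \leq 4q - 1$, so $s \leq \sqrt{4q-1}$, and therefore
\begin{displaymath}
\epsilon \;\geq\; 2\sqrt{q}-\sqrt{4q-1} \;=\; \frac{1}{2\sqrt{q}+\sqrt{4q-1}} \;>\; \frac{1}{4\sqrt{q}}.
\end{displaymath}
Combining this with the trivial bound $3\sqrt{q}-\epsilon \geq 3\sqrt{q}-1$ (valid since $\epsilon < 1$) gives
\begin{displaymath}
\epsilon(3\sqrt{q}-\epsilon)^2 \;>\; \frac{(3\sqrt{q}-1)^2}{4\sqrt{q}} \;=\; \frac{9\sqrt{q}}{4}-\frac{3}{2}+\frac{1}{4\sqrt{q}}.
\end{displaymath}

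Finally I would check that this lower bound exceeds $1$ for every integer $q \geq 2$. The right-hand side is increasing in $q$, so it suffices to verify $q = 2$, where it evaluates to roughly $1.86 > 1$; this contradicts $f_q(-s) < 1$ and forces $q$ to be a square. The main (mild) obstacle is noticing the factorization $f_q(-s) = \epsilon(3\sqrt{q}-\epsilon)^2$; everything else is a routine one-variable estimate.
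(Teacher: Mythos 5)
Your proposal is correct, and it follows the same overall strategy as the paper --- assume $q$ is not a square, bound $\epsilon=2\sqrt{q}-\lfloor 2\sqrt{q}\rfloor$ from below by roughly $1/\sqrt{q}$, bound $f_q(a_1)$ from below by roughly $\epsilon\,(3\sqrt{q}-1)^2$, and contradict $f_q(a_1)<1$ --- but your execution of both halves is genuinely different and cleaner. For the upper half, the paper does not have your exact identity $f_q(-s)=\epsilon\,(3\sqrt{q}-\epsilon)^2$ (which I have checked: expanding $(\epsilon-2\sqrt{q})^3-3q(\epsilon-2\sqrt{q})+2\sqrt{q}^3$ gives $\epsilon^3-6\sqrt{q}\,\epsilon^2+9q\epsilon$ exactly); instead it argues that $f_q$ is increasing and concave on $(-2\sqrt{q},-\sqrt{q})$, so the secant through $(-2\sqrt{q},0)$ and $(-2\sqrt{q}+1,(3\sqrt{q}-1)^2)$ lies below the graph, yielding $\epsilon<1/(3\sqrt{q}-1)^2$ from $f_q(a_1)<1$. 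Your factorization replaces this calculus argument with one line of algebra and even gives the sharper conclusion $\epsilon\,(3\sqrt{q}-\epsilon)^2<1$. For the lower half, the paper writes $4q=s^2+b$ with $b\geq 1$ and uses $\sqrt{1+x}\geq 1+(\sqrt{2}-1)x$ to get $\epsilon\geq(\sqrt{2}-1)/s$, whereas you use the difference-of-squares identity $2\sqrt{q}-\sqrt{4q-1}=1/(2\sqrt{q}+\sqrt{4q-1})>1/(4\sqrt{q})$; these are equivalent in strength and both suffice, since both give a lower bound for $f_q(a_1)$ that exceeds $1$ already at $q=2$ and is increasing in $q$. All the small supporting facts you need are in order: $\epsilon\in[0,1)$ so $3\sqrt{q}-\epsilon>3\sqrt{q}-1>0$, and $s^2\leq 4q-1$ because $4q$ is a perfect square only when $q$ is. In short, a correct proof by a tidier route than the paper's.
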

\begin{proof}
Assume that $q$ is not a square. Let $a=-a_1=\msetf{2\sqrt{q}}$.

The function $f_q$ is strictly monotonically increasing and strictly concave on
the interval $\msetp{-2\sqrt{q},-\sqrt{q}}$, because $\frac{df_q}{dx}=3x^2-3q$
and $\frac{d^2f_q}{dx^2}=6x$ are positive and negative respectively. Let $x_0$
be the intersection between the line $y=1$ and the line through $\msetp{-2\sqrt
{q},0}$ and $\msetp{-2\sqrt{q}+1,f_q\msetp{-2\sqrt{q}+1}}$. Then
\begin{displaymath}
a_1+2\sqrt{q}
<x_0+2\sqrt{q}
=\frac{1}{\msetp{3\sqrt{q}-1}^2}.
\end{displaymath}

Notice that $4q=a^2+b$ with $1\leq b\leq 2a$. Since $\sqrt{1+x}\geq 1+\msetp{
\sqrt{2}-1}x$ for $0\leq x\leq 1$,
\begin{displaymath}
a_1+2\sqrt{q}
=a\msetp{-1+\sqrt{1+\frac{b}{a^2}}}
\geq \msetp{\sqrt{2}-1}\frac{b}{a}
\geq \frac{\sqrt{2}-1}{a}
\geq \frac{\sqrt{2}-1}{\sqrt{q}}.
\end{displaymath}

Combining the upper and lower bounds on $-a+2\sqrt{q}$ yields
\begin{displaymath}
0>\msetp{\sqrt{2}-1}\msetp{3\sqrt{q}-1}^2-\sqrt{q},
\end{displaymath}
but the right-hand side is positive by construction. Contradiction.
\end{proof}

We recall a sufficient condition on $q$ such that $-a_3=\msetf{2\sqrt{q}^3}$
for $a_1=\msets{\sqrt{q}}$. It is~\cite[Proposition~2.7.1]{soomro2013} with a
different proof.

\begin{proposition}[Soomro]
\label{mc:prop:soomro}
If $q=a_1^2+b$ with integers $a_1,b$ such that $a_1\geq 2$ and $\msetv{b}\leq
\sqrt{a_1}$, then $-a_3=\msetf{2\sqrt{q}^3}$.
\end{proposition}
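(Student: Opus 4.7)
The plan is to reduce the claim to an elementary one-variable inequality. Using the recurrence $a_{n+1} = a_1 a_n - q a_{n-1}$ with $a_0 = 2$ one computes $a_3 = a_1^3 - 3q a_1$, so substituting $q = a_1^2 + b$ makes the statement $-a_3 = \msetf{2\sqrt{q}^3}$ equivalent to
\[
0 \leq 2\sqrt{q}^3 - 2a_1^3 - 3a_1 b < 1,
\]
which is precisely the displayed equivalence appearing immediately before Figure~\ref{mc:fig:max-deg-3}.

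The key manoeuvre is to set $\delta = \sqrt{q} - a_1$, so that $b = \delta(\sqrt{q} + a_1) = \delta(2a_1 + \delta)$. Expanding $\sqrt{q}^3 = (a_1 + \delta)^3$ and using the latter factorisation of $b$ to cancel the $6a_1^2\delta$ and one copy of $3a_1\delta^2$ collapses the expression to the clean identity
\[
2\sqrt{q}^3 - 2a_1^3 - 3a_1 b \;=\; 3a_1\delta^2 + 2\delta^3 \;=\; \delta^2(3a_1 + 2\delta).
\]
The problem is therefore reduced to verifying $0 \leq \delta^2(3a_1 + 2\delta) < 1$ under the hypotheses $a_1 \geq 2$ and $\msetv{b} \leq \sqrt{a_1}$.

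The lower bound is immediate: $\sqrt{q} \geq 0$ forces $\delta > -a_1$ and hence $3a_1 + 2\delta > a_1 > 0$. For the upper bound I would split on the sign of $b$. When $b \geq 0$, one has $\sqrt{q} \geq a_1$, giving $0 \leq \delta \leq b/(2a_1) \leq 1/(2\sqrt{a_1})$, whence $3a_1\delta^2 \leq 3/4$ and $2\delta^3 \leq 1/(4a_1^{3/2})$; their sum is strictly less than $1$ for $a_1 \geq 2$. When $b < 0$, one has $\delta < 0$, so $\delta^2(3a_1 + 2\delta) \leq 3a_1\delta^2$, and the estimate $\sqrt{q} \geq a_1\sqrt{1 - a_1^{-3/2}}$ (from $b \geq -\sqrt{a_1}$) applied to $\delta^2 = b^2/(a_1 + \sqrt{q})^2$ yields $3a_1\delta^2 < 1$ for $a_1 \geq 2$.

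The main delicacy is the $b < 0$ case, where $a_1 + \sqrt{q}$ is strictly smaller than $2a_1$ and the bound becomes tightest at $a_1 = 2$. I would therefore check the constants at the smallest admissible $a_1$ first to make sure $(a_1 + \sqrt{q})^2 > 3a_1^2$ holds with room to spare, and then invoke monotonicity in $a_1$ to conclude the general case.
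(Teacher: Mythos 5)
Your proof is correct, but it takes a genuinely different route from the paper's. You substitute $\delta=\sqrt{q}-a_1$ and exploit the factorisation $b=\delta\msetp{2a_1+\delta}$ to obtain the \emph{exact} identity $f_q\msetp{a_1}=2\sqrt{q}^3-2a_1^3-3a_1b=\delta^2\msetp{3a_1+2\delta}$, after which everything reduces to elementary bounds on $\delta$ (I checked the identity and the constants: in the case $b\geq 0$ you get $3a_1\delta^2+2\delta^3\leq\frac{3}{4}+\frac{1}{4}a_1^{-3/2}<1$, and in the case $b<0$ the needed inequality $\msetp{\sqrt{q}+a_1}^2>3a_1^2$ follows from $1+\sqrt{1-a_1^{-3/2}}>\sqrt{3}$, which holds for $a_1\geq 2$ and is monotone in $a_1$). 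The paper instead works with $x=\frac{b}{a_1^2}$ and proves, by a calculus analysis of the auxiliary function $g_\epsilon\msetp{x}=1+\frac{3}{2}x+\frac{3}{8}\msetp{1+\epsilon}x^2-\sqrt{1+x}^3$ (locating its extrema and the zero $x_\epsilon$), the approximate inequality $\sqrt{1+x}^3<1+\frac{3}{2}x+\frac{3}{8}\msetp{1+\epsilon}x^2$, giving $0\leq f_q\msetp{a_1}<\frac{3}{4}\msetp{1+\epsilon}\frac{b^2}{a_1}$ and then setting $\epsilon=\frac{1}{3}$. Your argument is more elementary and self-contained, avoiding the analysis of $g_\epsilon$ entirely; the paper's parametrised inequality buys the remark made immediately after the proof, namely that the hypothesis $b^2\leq a_1$ can be relaxed to $b^2\leq\frac{4}{3}\frac{1}{1+\epsilon}a_1$ at the cost of a lower bound on $a_1$, a refinement that is less immediately visible from your two-case estimates, although your identity would support it too with a slightly sharper analysis of the term $2\delta^3$.
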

\begin{proof}
Let $0<\epsilon\leq\frac{1}{3}$. Consider the function
\begin{displaymath}
g_\epsilon\msetp{x}
=1+\frac{3}{2}x+\frac{3}{8}\msetp{1+\epsilon}x^2-\sqrt{1+x}^3
\end{displaymath}
and compute $\frac{dg_\epsilon}{dx}=\frac{3}{2}+\frac{3}{4}\msetp{1+\epsilon}x-
\frac{3}{2}\sqrt{1+x}$ and $\frac{d^2g_\epsilon}{dx^2}=\frac{3}{4}\msetp{1+
\epsilon}-\frac{3}{4}\sqrt{1+x}^{-1}$. The function $g_\epsilon$ has extrema in
$x=-\frac{4\epsilon}{\msetp{1+\epsilon}^2}$ and $x=0$. The former is a maximum
and the latter is a minimum. Let $x_\epsilon$ the unique zero of $g_\epsilon$
such that $-1\leq x_\epsilon<-\frac{4\epsilon}{\msetp{1+\epsilon}^2}$. Hence
for all $x>x_\epsilon$ and $x\neq 0$
\begin{displaymath}
\sqrt{1+x}^3
<1+\frac{3}{2}x+\frac{3}{8}\msetp{1+\epsilon}x^2.
\end{displaymath}

Define $x=\frac{b}{a_1^2}$. Notice that
\begin{displaymath}
f_q\msetp{a_1}
=-2{a_1}^3-3ba_1+2\sqrt{a_1^2+b}^3
=2a_1^3\msetp{-1-\frac{3}{2}x+\sqrt{1+x}^3}
\end{displaymath}
is minimal on $\msetp{x_\epsilon,\infty}$ for $x=0$. If $x>x_\epsilon$ and $x
\neq 0$, then
\begin{displaymath}
0
\leq f_q\msetp{a_1}
=2a_1^3\msetp{-1-\frac{3}{2}x+\sqrt{1+x}^3}
<\frac{3}{4}\msetp{1+\epsilon}\frac{b^2}{a_1}.
\end{displaymath}
Observe that if $b=0$ (or $x=0$) then $f_q\msetp{a_1}=0$.

Assume that $\epsilon=\frac{1}{3}$ and $\msetv{b}\leq\sqrt{a_1}$, then $x\geq
-\sqrt{a_1}^{-3}>-1=x_\epsilon$ and $0\leq f_q\msetp{a_1}<\frac{3}{4}\msetp{1+
\epsilon}=1$. Hence $-a_3=\msetf{2\sqrt{q}^3}$.
\end{proof}

A closer look at the proof tells us that in the proposition above the
constraint $b^2\leq a_1$ can be replaced by $b^2\leq \frac{4}{3}\frac{1}{1+
\epsilon}a_1$ at the expense of introducing a lower bound on $a_1$ in terms of
$\epsilon$.

Before proving Theorem~\ref{mc:thm:max3prime}, let us motivate that it is a
non-trivial statement. Let $q,a_1$ be a pair such that $q$ is an odd prime and
$\msetv{a_1}\leq 2\sqrt{q}$. Then $a_1=\msets{\sqrt{q}}$ and the proposition
above suggests that $q=a_1^2+b$ with $b^2\leq ca_1$ for some positive constant
$c$. However the primes of this form have Dirichlet density zero, see \cite
[Remark~2.7.2]{soomro2013}. Hence it is unlikely to find such primes.

The idea of the proof is to reduce the problem to a question on Gaussian primes
in a small sector of the plane and apply \cite[Theorem~1]{harman2001}.

\begin{proof}[Proof of Theorem~\ref{mc:thm:max3prime}]
Consider the set
\begin{displaymath}
S_1
=\msetb{\msetp{a,b}\in\mzz^2:p=a^2+b\text{ prime},0<a,\msetv{b}\leq\sqrt{a}}
\end{displaymath}
and the subset $S_2=\msetb{\msetp{a,b}\in S_1:b\text{ square}}$. The set
$S_2$ corresponds to
\begin{displaymath}
S_3
=\msetb{\msetp{a,c}\in\mzz^2:p=a^2+c^2\text{ prime},0<a,0\leq c\leq\sqrt[4]
{a}}.
\end{displaymath}
Define for $\theta>0$
\begin{displaymath}
S_4\msetp{\theta}=\msetb{\msetp{a,c}\in\mzz^2:p=a^2+c^2\text{ prime},0<a,0\leq
c<p^\theta}
\end{displaymath}
and write $S_4\msetp{\theta}=S_5\msetp{\theta}\cup S_6\msetp{\theta}$ with $S_5
\msetp{\theta}=\msetb{\msetp{a,c}\in S_4\msetp{\theta}:a\geq p^{4\theta}}$ and
$S_6\msetp{\theta}=\msetb{\msetp{a,c}\in S_4\msetp{\theta}:a<p^{4\theta}}$.
Observe that $S_5\msetp{\theta}\subset S_3$. If $\theta<\frac{1}{8}$, then the
set $S_6\msetp{\theta}$ is finite, because $p=a^2+c^2<p^{8\theta}+p^{2\theta}$
and
\begin{displaymath}
\lim_{p\rightarrow\infty}\msetp{p^{8\theta-1}+p^{2\theta-1}}=0.
\end{displaymath}

The set $S_4\msetp{0.119}$ is infinite by \cite[Theorem~1]{harman2001} and
$0.119<\frac{1}{8}$. Hence the sets $S_5\msetp{0.119}\subset S_3$ and $S_2
\subset S_1$ are also infinite. If $p=a_1^2+b\in S_1$, then $\msetv{a_1}\leq
2\sqrt{q}$ and $-a_3=\msetf{2\sqrt{q}^3}$ by Proposition~\ref{mc:prop:soomro}.
\end{proof}

\bibliographystyle{plainurl}
\bibliography{bibliography}

\end{document}